\DeclarePairedDelimiter\floor{\lfloor}{\rfloor}
\newtheorem{theorem}{Theorem}[section]
\newtheorem{lemma}[theorem]{Lemma}
\newtheorem{definition}[theorem]{Definition}
\newtheorem{proposition}[theorem]{Proposition}
\newtheorem{remark}[theorem]{Remark}
\title[Anisotropic fractional conductivity]{Uniqueness for the anisotropic fractional conductivity equation}
\keywords{Fractional Laplacian, fractional gradient, Calderón problem, conductivity equation}
\subjclass[2020]{Primary 35R30; secondary 26A33, 42B37, 46F12}
\author{Giovanni Covi}
\address{Institut fur Angewandte Mathematik, Ruprecht-Karls-Universit\"at Heidelberg, Im Neuenheimer Feld 205, 69120 Heidelberg, Germany}
\email{giovanni.covi@uni-heidelberg.de}
\date{\today}
\newcommand{\C}{{\mathbb C}}
\newcommand{\R}{{\mathbb R}}
\newcommand{\N}{{\mathbb N}}
\newcommand{\eps}{\varepsilon}
\newcommand{\dimens}{n}
\newcommand{\aabs}[1]{\left\lVert #1 \right\rVert}
\newcommand{\Sym}{\mbox{Sym}}
\newcommand{\tridots}{%
  \mathinner{\vcenter{
    \normalbaselines
    \baselineskip=\dimexpr\fontcharht\textfont0`:-\fontcharht\textfont0`.\relax
    \hbox{.}\hbox{.}\hbox{.}
  }}
}
\begin{document}

\maketitle
\begin{abstract}
In this paper we study an inverse problem for fractional anisotropic conductivity. Our nonlocal operator is based on the well-developed theory of nonlocal vector calculus, and differs substantially from other generalizations of the classical anisotropic conductivity operator obtained spectrally. We show that the anisotropic conductivity matrix can be recovered uniquely from fractional Dirichlet-to-Neumann data up to a natural gauge. Our analysis makes use of techniques recently developed for the study of the isotropic fractional elasticity equation, and generalizes them to the case of non-separable, anisotropic conductivities. The motivation for our study stems from its relation to the classical anisotropic Calder\'on problem, which at the time of writing is one of the main open problems in the field.
\end{abstract}

\section{Introduction}

Let $s\in(0,1)$. We study the anisotropic fractional conductivity operator
$$ \mathbf C^s_A u := (\nabla\cdot)^s (A(x,y)\cdot \nabla^s u), $$
where $A\in L^\infty(\R^{2n}, \R^{n\times n})$ is an $n\times n$ matrix function of the variables $x,y\in\R^n$. The operators $(\nabla\cdot)^s, \nabla^s$, which we will define in detail in the upcoming Section \ref{sec-prel}, are the fractional divergence and gradient. They were firstly introduced in the nonlocal vector calculus presented in \cite{DGLZ12}, and their properties were further studied in \cite{C20, C20a, CRZ22}, among others. They represent fractional counterparts to the classical divergence and gradient operators. As a matter of fact, they enjoy the expected properties $(\nabla^s)^* = (\nabla\cdot)^s$ and $(\nabla\cdot)^s\nabla^s = (-\Delta)^s$. The operator $\mathbf C^s_A$ will be rigorously defined in Section \ref{sec-oper}.
\\

 Our inverse problem consists in recovering the values of $A$ in $\Omega$ from measurements performed in the exterior $\Omega_e:=\R^n\setminus\overline\Omega$. These measurement will be given in the form of a Dirichlet-to-Neumann (DN) map $\Lambda_A$, associating each exterior datum $f$ of the direct problem
\begin{equation*}
    \begin{array}{rll}
       \mathbf C^s_A u & =0& \quad \mbox{ in } \Omega \\
       u & = f & \quad \mbox{ in } \Omega_e
    \end{array}
\end{equation*}
to the corresponding nonlocal Neumann datum $\mathbf C^s_Au_f|_{\Omega_e}$. Here $u_f\in H^s(\R^n)$ is the unique weak solution of the above problem associated to $f$. We will show that there exists in fact only one such $u_f$ in Section \ref{sec-wellp}, and thus $\Lambda_A$ is well-defined. In order to represent the inaccessibility to the measurements of certain regions of space, which is a very common physical situation, we assume that $\Lambda_Af$ is known only on $W_2$ and only for $f\in C^\infty_c(W_1)$, where $W_1, W_2 \subset\Omega_e$ are non-empty, open and disjoint subsets of the exterior. 
\\

The inverse problem we are interested in can be formulated as follows:
\\

\textbf{Q1:} \emph{Does $\Lambda_{A_1}f|_{W_2}=\Lambda_{A_2}f|_{W_2}$ for all $f\in C^\infty_c(W_1)$ imply $A_1=A_2$?}
\\

However, by Remark \ref{natural-gauge} it becomes soon evident that the answer to question \textbf{Q1} can not be positive, since the anisotropic fractional conductivity operator $\mathbf C^s_A$ possesses a natural gauge related to the properties of symmetry of the matrix $A$. Therefore, we shall rather ask: 
\\

\textbf{Q2:} \emph{Does $\Lambda_{A_1}f|_{W_2}=\Lambda_{A_2}f|_{W_2}$ for all $f\in C^\infty_c(W_1)$ imply $A_{1,s}=A_{2,s}$?}

Here $A_s$ is a symmetrized version of the matrix $A$, in a sense that will be made clear in the same Remark \ref{natural-gauge}. 
\\

The study of the fractional Calder\'on problem dates back to the seminal article \cite{GSU20}, which considered the recovery of a bounded potential for the fractional Schr\"odinger equation in a bounded domain from measurements performed in the exterior in the form of a nonlocal DN map. This result was then generalized in many different directions by the works \cite{BGU21, GLX17, GRSU20, C21, CGFR22, CMRU22, Li20a, Li20b, Li21, KRS21, RS19}, to cite a few. These contributions address the problem of uniqueness in the presence of both local and nonlocal perturbations, the problem of stability, and that of reconstruction. The inverse problem we study represents a generalization of the inverse problem for the \emph{isotropic} fractional conductivity equation, which was first introduced in \cite{C20a} and later studied quite extensively in the works \cite{C20, CRZ22, CRTZ22, LRZ22, RZ22}, among others. 

Our study is also related to the inverse problem for the anisotropic \emph{classical} conductivity equation. This is one of the main open problems in the field; it is only well understood at the time of writing in dimension $n=2$ (see \cite{LU01}), and only partial results are available in dimension $n\geq 3$ (see \cite{LTU03, LU89, DSFKSU07, DKLS16}). 

The very recent article \cite{FGKU21} also bears a relation to ours, in that the authors of this work also consider the problem of uniqueness for a fractional anisotropic conductivity operator. This is however obtained spectrally, and thus generalizes the classical anisotropic conductivity operator in a different way than done here. Therefore, the used techniques and obtained results differ substantially.\\

In order to answer question \textbf{Q2}, we attempt to adopt the technique from \cite{GSU20}. We first prove the Alessandrini identity, that is, an integral identity which relates the difference of the DN maps $\Lambda_{A_1}, \Lambda_{A_2}$ to the differences of the matrices $A_1, A_2$ using special solutions to the direct problem. The left-hand side of the Alessandrini identity is then equated to $0$ by the assumption on the DN maps. By testing the resulting identity with particular solutions, we are able to deduce the desired result. In order to produce such solutions we prove a Runge approximation property, using a technique involving the Hahn-Banach theorem and the unique continuation property (UCP) for the fractional Laplacian. 
\\

However, as observed in \cite[Remark 7.3]{CdHS22}, this technique will not lead to the wanted result by itself. This is due to the fact that the operator appearing in the right-hand side of the Alessandrini identity is nonlocal, which causes problems with the supports of both solutions and data. This issue may be avoided by means of the so called \emph{fractional Liouville reduction}, which transforms a conductivity-type problem in a Schr\"odinger-type one. This technique, whose name and idea come from the method used for the classical conductivity equation, was introduced in \cite{C20} for the \emph{isotropic} fractional conductivity equation and later generalized in \cite{CdHS22} for studying the inverse problem for fractional linear elasticity. Once the reduction is performed (see Section \ref{sec-wellp}), the direct problem takes the form
\begin{equation}
    \begin{split}
        \mathbf (-\Delta)^{s-1}D w + w\cdot Q = 0 & \quad\mbox{ in } \Omega \\
w = g & \quad\mbox{ in } \Omega_e
    \end{split},
\end{equation}
where the new solution $w$ and the new exterior datum $g$ are respectively computed from $u$ and $f$. Here the transformed potential $Q$ contains all the information about the matrix $A$, and $D$ is a fixed second order differential operator. As it will be shown in Section \ref{sec-mainproof}, the technique based on the Runge approximation property explained above works for the transformed problem. This is due to the fact that the relative Alessandrini identity has a local operator on the right-hand side, namely the matrix multiplication of $Q_1-Q_2$ against two special solutions of the transformed problem.
\\

There is however one more obstacle which needs to be addressed. In both \cite{C20} and \cite{CdHS22} the coefficient part of the considered unreduced operators, respectively the matrix $\gamma^{1/2}(x)\gamma^{1/2}(y)$Id and the tensor $C^{1/2}(x):C^{1/2}(y)$, were by definition separable functions of the variables $x$ and $y$. This detail is of great importance at the time of proving the fractional Liouville reduction. Since our matrix $A(x,y)$ is not in general a separable function of the variables $x$ and $y$, we are prevented from applying the fractional Liouville reduction directly. However, in Section \ref{sec-oper} we shall prove the existence of a decomposition of the kind
\begin{equation}
    A_s(x,y) = \sum_k \Phi_k(x) \odot \Phi_k(y),
\end{equation}
where $\{\Phi_k\}_{k\in\N}$ is a suited sequence of matrices depending on one variable, and $\odot$ indicates the Hadamard product (see Section \ref{sec-prel} for the definition of this and other matrix operations). Once this formula is achieved, it will be possible to show that a fractional Liouville reduction holds for each element of the above sum. As a consequence, the functions $w, Q$ and $g$ in the reduced problem will rather be sequences. 
\\

In order to obtain a suitable Alessandrini identity for the transformed problem, we need to additionally assume that the anisotropic coefficients are in gauge in the following sense: 

\begin{definition}\label{def:gauge}
 Let $\Omega\subset\R^n$ be open and bounded. We say that $A_1, A_2\in L^\infty(\R^{2n}, \R^{n\times n})$ are in gauge if there exists $\rho\in C^\infty_c(\Omega)$ such that $A_{2,s}(x,y)=(1+\rho(x))(1+\rho(y))A_{1,s}(x,y)$ for all $x,y\in \R^n$. In this case we write $A_1\sim A_2$.
\end{definition}

Using the techniques described above, we have obtained the following uniqueness result:

\begin{theorem}\label{main-theorem}
Let $\Omega, W_1, W_2\subset\mathbb R^n$ be bounded open sets such that $W_1, W_2\subset\Omega_e$, and assume $s\in(0,1)$. Let $A_1,A_2\in L^\infty(\R^{2n}, \R^{n\times n})$ be two anisotropic coefficients satisfying assumptions \ref{ass-exterior}--\ref{ass-Phi} and \ref{ass-isotropic}. Then $A_1\sim A_2$ and 
$$ \Lambda_{A_1}f|_{W_2} = \Lambda_{A_2}f|_{W_2} \qquad \mbox{ for all } f\in C^\infty_c(W_1)$$
hold if and only if $A_{1,s}=A_{2,s}$. 
\end{theorem}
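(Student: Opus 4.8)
The plan is to follow the Calderón-problem strategy sketched in the introduction, carried out on the Liouville-reduced problem. \emph{First}, I would establish the forward direction's nontrivial content: assuming $A_1 \sim A_2$ (so $A_{2,s} = (1+\rho(x))(1+\rho(y))A_{1,s}$ with $\rho \in C^\infty_c(\Omega)$) and that the DN maps agree on $W_2 \times C^\infty_c(W_1)$, I want to conclude $A_{1,s} = A_{2,s}$, which amounts to showing $\rho \equiv 0$. The reverse implication, that $A_{1,s}=A_{2,s}$ forces equality of DN maps, should be essentially immediate from the fact that $\Lambda_A$ depends on $A$ only through $A_s$ (this is the content of the natural gauge, Remark \ref{natural-gauge}) — I would dispatch it in a sentence.

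\emph{Second}, using the decomposition $A_s(x,y)=\sum_k \Phi_k(x)\odot\Phi_k(y)$ from Section \ref{sec-oper} together with the gauge relation, I would perform the fractional Liouville reduction termwise (Section \ref{sec-wellp}), transforming the conductivity problem $\mathbf C^s_{A_i}u=0$ into a Schrödinger-type problem $(-\Delta)^{s-1}D w + w\cdot Q_i = 0$ in $\Omega$, with $w=g$ in $\Omega_e$, where $w,Q_i,g$ are now sequences indexed by $k$, and where $Q_1$ and $Q_2$ differ precisely by the multiplicative factor coming from $\rho$. \emph{Third}, I would derive the Alessandrini identity for the reduced problem: testing the difference of the two equations against special solutions $w_1, w_2$ of the transformed problems with exterior data supported in $W_1$, one obtains an integral identity whose left side vanishes by the hypothesis on the DN maps and whose right side is $\int (Q_1-Q_2)\, w_1\cdot w_2$ — crucially a \emph{local} integrand, which is the whole point of the reduction. \emph{Fourth}, I would invoke the Runge approximation property (proved via Hahn–Banach plus the UCP for $(-\Delta)^s$, as indicated) to show that products $w_1 w_2$ of such solutions are dense enough in an appropriate space to conclude $Q_1 = Q_2$ in $\Omega$, and then unwind the reduction to deduce $\rho \equiv 0$, hence $A_{1,s}=A_{2,s}$; assumption \ref{ass-isotropic} should be what guarantees the unwinding step is not obstructed.

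The main obstacle, I expect, is controlling the termwise Liouville reduction and the resulting \emph{sequence}-valued reduced problem: one must verify that the series $\sum_k \Phi_k(x)\odot\Phi_k(y)$ converges in a strong enough topology for the reduction to be performed term by term and reassembled, that the transformed potential $Q=\{Q_k\}$ lies in a function space for which the well-posedness and UCP machinery applies (this is presumably why the regularity/summability hypothesis \ref{ass-Phi} on the $\Phi_k$ is imposed), and that the Hadamard-product structure — rather than ordinary matrix multiplication — is compatible with all of these steps. A secondary delicate point is that the Runge density argument must be run for the reduced operator $(-\Delta)^{s-1}D + (\,\cdot\,)Q$ with its first-order part $D$, so one needs the UCP and the Hahn–Banach duality to survive the presence of $D$; the introduction's remark that the right-hand side of the \emph{reduced} Alessandrini identity is local is what makes this tractable, but the bookkeeping across the index $k$ is where the real work lies.
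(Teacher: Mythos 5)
Your skeleton (termwise Liouville reduction, Alessandrini identity for the reduced problem, Runge approximation via Hahn--Banach and the UCP) is indeed the paper's route, but the end-game contains a genuine gap. The Runge lemma only gives density in $L^2(\Omega)$ of the \emph{scalar} functions $P_{A_j}f-f$, so the special solutions of the reduced problem that you can insert into the Alessandrini identity are always of the form $w_j=\Phi_j u_j$ with $u_j$ scalar; consequently $w_1\otimes w_2=u_1u_2\,\Phi_1\otimes\Phi_2$ is a scalar multiple of one fixed tensor, and the density argument only lets you conclude that the single scalar function $\Phi_1\tridots (Q_1-Q_2)\tridots \Phi_2$ vanishes in $\Omega$ --- not that $Q_1=Q_2$ componentwise. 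Your step ``conclude $Q_1=Q_2$ in $\Omega$'' therefore does not follow from the density you have: the components of $Q_1-Q_2$ transverse to $\Phi_1\otimes\Phi_2$ are invisible to the measurements. (A posteriori $Q_1=Q_2$ is true, but only after one already knows $\Phi_1=\Phi_2$, so the deduction cannot run in the order you propose.)

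The missing idea is how to exploit the weaker, contracted identity, and this is where the gauge hypothesis actually enters. Writing $\Phi_2=(1+\rho)\Phi_1$ one computes $Q_1-Q_2=\Phi_1\otimes\bigl[(-\Delta)^{s-1}D(\rho\Phi_1)-\rho(-\Delta)^{s-1}D\Phi_1\bigr]/\bigl((1+\rho)|\Phi_1|^2\bigr)$, whereupon the vanishing of $\Phi_1\tridots (Q_1-Q_2)\tridots \Phi_2$ in $\Omega$ is exactly the statement that $\rho\Phi_1$ is a weak solution of the transformed problem $(-\Delta)^{s-1}Dw+w\tridots Q_1=0$ in $\Omega$ with zero exterior value and zero source; well-posedness of that problem then forces $\rho\Phi_1\equiv 0$, hence $\rho\equiv 0$ (since $|\Phi_1|>0$ by \ref{ass-positive}), so $\Phi_1=\Phi_2$ and $A_{1,s}=A_{2,s}$ via the decomposition \eqref{decomposition}. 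Your ``unwind the reduction'' must be replaced by precisely this well-posedness argument. Two smaller corrections: assumption \ref{ass-isotropic} is needed inside the proof of the Runge lemma (to close the UCP argument for the exterior coefficients), not in the unwinding step as you suggest; and the two exterior data are taken in the disjoint sets $W_1$ and $W_2$ (one may assume disjointness after restriction), which together with locality is what removes the cross terms when passing to the limit in the approximation step. The reverse implication is, as you say, immediate from the fact that $B^s_A$ depends only on $A_s$.
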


Assumptions \ref{ass-exterior}--\ref{ass-Phi} describe the H\"older regularity of $A$, its behaviour in $\R^{2n}\setminus\Omega^2$, and a uniform positivity condition which is required for the well-posedness of the direct problem (see Section \ref{subsec-ass}). Assumption \ref{ass-isotropic} is only needed for the proof of the Runge approximation property, and it is an isotropicity requirement for the exterior values of the coefficients (see Lemma \ref{better-runge-2}). 

\subsection{Organization of the rest of the article}
The article is organized as follows. The first section is the Introduction, which presents the main problem, its connection to the literature, the statements of the main results and the techniques involved in their proofs. The second section is dedicated to the necessary Preliminaries and definitions, in particular about tensors, fractional operators, H\"older and Sobolev spaces. Section 3 discusses our assumptions for the anisotropy matrix, introduces the anisotropic fractional conductivity operator, and proves our fundamental reduction result. The fourth section treats the problem of well-posedness both for our original problem and for the transformed one, presenting the fractional Liouville reduction related to our discussion. The DN map and Alessandrini identity are both presented in Section 5, while Section 6 contains the Runge approximation property and the proof of the main theorem. Finally, the last section 7 gives the relation between the classical and fractional problems of anisotropic conductivity in the limit case $s\rightarrow 1$. 

\section{Preliminaries}\label{sec-prel}

\subsection{Operations on tensors}
In this section we introduce a number of operations between tensors and sequences of tensors. Define $\N$ as the set of positive integers, $\N_0:=\N\cup\{0\}$ as the set of non-negative integers, and $\N_\infty:= \N \cup\{\infty\}$ as the set of \emph{extended} positive integers. In all the definitions below, we assume that $m,n \in \mathbb N$, that $a\in \mathbb N_{\infty}^m, b\in \mathbb N_{\infty}^n$ are two (extended) multi-indices, and that $A,B$ are two (extended) tensors respectively given by $A_{\alpha_{1}, ... , \alpha_{m}}$ and $B_{\beta_{1}, ... , \beta_{n}}$, with $\alpha_i \in \{1, ..., a_i\}$ for $i=1, ..., m$ and $\beta_j \in \{1, ..., b_j\}$ for $j=1, ..., n$.

\begin{definition}[Tensor product]\label{def-tensor}
The tensor $A\otimes B$ of elements
$$(A\otimes B)_{\alpha_1, ..., \alpha_m, \beta_1, ..., \beta_n} := A_{\alpha_{1}, ... , \alpha_{m}}B_{\beta_{1}, ... , \beta_{n}}$$ is the \emph{tensor product} of $A$ and $B$. If $B$ is a scalar, in which case $n=b_1=1$, we simply write $AB$ for $A\otimes B$.
\end{definition}

\begin{definition}[Hadamard product]\label{def-hadamard}
Assume $m=n$ and $a=b$. The tensor $A\odot B$ of elements
$$(A\odot B)_{\alpha_1, ..., \alpha_m} := A_{\alpha_{1}, ... , \alpha_{m}}B_{\alpha_{1}, ... , \alpha_{m}}$$ is the \emph{Hadamard product} of $A$ and $B$.
\end{definition}

\begin{definition}[Tensor contraction of order $k$]
 Let $k \in \mathbb N$ verify $k\leq \min \{m,n\}$, and let $a_{m+\ell-k} = b_{\ell}$ for all $\ell \in \{1, ..., k\}$. The tensor $A\cdot_k B$ of elements 
$$ (A\cdot_k B)_{\alpha_1, ..., \alpha_{m-k}, \beta_{k+1}, ..., \beta_n} := \sum_{\beta_1=1}^{b_1}...\sum_{\beta_k = 1}^{b_k}A_{\alpha_1, ..., \alpha_{m-k}, \beta_1, ..., \beta_k} B_{\beta_1, ..., \beta_n}$$
is the \emph{$k$-th contraction} of $A$ and $B$. For simplicity we use $\cdot$ for $\cdot_1$, $:$ for $\cdot_2$, and $\tridots$ for $\cdot_3$.
\end{definition}

A sequence $\{A_k\}_{k\in\N}$ of (possibly extended) tensors can itself be seen as an extended tensor $A_{k, \alpha_1, ... \alpha_n}$ with one more index $k$ with range in $\N$. With this identification in mind, all the above definitions can be extended to sequences of tensors. They can also be naturally extended to functions and function spaces (see \cite{CdHS22} for more properties relative to these tensor operations).

\begin{remark}
It is customary to interpret the symbols $\nabla u, \nabla\cdot u$ for the gradient and divergence of a function respectively as a tensor and a scalar product (i.e. a contraction), where $\nabla$ formally represents a vector of partial derivatives. Building on this, we use the symbol $\nabla^2u := (\nabla\otimes\nabla) u$ for the Hessian matrix of $u$. Similarly, with clear meaning of the symbols, $\nabla^2\odot$ is the differential operator mapping an $n\times n$ matrix $M$ to $\nabla^2\odot M$, with $(\nabla^2\odot M)_{ij} := \partial_i\partial_j M_{ij}$.

Observe that here and everywhere else we are \underline{\emph{not}} assuming the Einstein summation convention on repeated indices.
\end{remark}

\subsection{Fractional Sobolev spaces}
Let $\hat u(\xi) = \mathcal{F}u(\xi) := \int_{\mathbb R^n} e^{-ix\cdot\xi}u(x)\,dx$ indicate the Fourier transform. If $r$ is any real number, we indicate the usual $L^2$-based Bessel potential spaces by $H^r(\R^n)$, with norm
$$\|u\|_{H^{r}} := \|(1+|\xi|^2)^{r/2}\hat u(\xi)\|_{L^2(\R^n)}.$$

\noindent Let $\Omega, F\subset\R^n$ respectively be an open and a closed set. Following \cite{McL00} and \cite{GSU20}, we define the fractional Sobolev spaces
\begin{align*}
    H^{r}(\Omega) & := \{ u|_\Omega , u\in H^{r}(\mathbb R^n) \} ,\\
    \widetilde H^{r}(\Omega) & := \,\mbox{closure of } C^\infty_c(\Omega) \mbox{ in } H^{r}(\mathbb R^n), \\
    H^{r}_F(\mathbb R^n) & := \{u\in H^{r}(\mathbb R^n) : \mbox{ supp}(u)\subset F\}. 
\end{align*}
\noindent As usual, the quotient norm $\|u\|_{H^{r}(\Omega)}:= \inf\{ \|w\|_{H^{r}} : w\in H^{r}(\mathbb R^n), w|_{\Omega} = u \}$ is used for $H^{r}(\Omega)$. The spaces defined above are related to each other in the following ways:
$$ \widetilde H^r(\Omega) \subseteq H^r(\Omega)\,,\quad \widetilde H^r(\Omega) \subseteq H^r_{\overline\Omega}\,,\quad H^r_F \subseteq H^r\,,$$
$$ (\widetilde H^r(\Omega))^* = H^{-r}(\Omega)\,,\quad (H^r(\Omega))^* = \widetilde H^{-r}(\Omega). $$
Moreover, if $\partial\Omega$ is Lipschitz, then $\widetilde H^r(\Omega) = H^r_{\overline\Omega}$ for all $r\in \mathbb R$ (see \cite{CMRU22, McL00}).

\subsection{Fractional operators}

Let $\mathcal S$ be the set of Schwartz functions, and assume $s\in\mathbb R^+\setminus\mathbb Z$. The \emph{fractional Laplacian} is the continuous map from $\mathcal S$ to $L^\infty$ defined by
$$(-\Delta)^s u := \mathcal{F}^{-1}(|\xi|^{2s}\hat u(\xi))$$
for $u\in\mathcal S$. One can uniquely extend this definition to have $(-\Delta)^s : H^r(\mathbb R^n)\rightarrow H^{r-2s}(\mathbb R^n)$ continuously for all $r\in\mathbb R$. It is also possible to have extensions to Sobolev spaces of negative exponent and to $L^p$-based Sobolev spaces (see e.g. \cite{CMR21, GSU20}). Alternatively, one can define the fractional Laplacian as the singular integral $$ (-\Delta)^su(x) := C_{n,s}\,PV\int_{\mathbb R^n}\frac{u(x)-u(y)}{|x-y|^{n+2s}}dy ,$$
where $C_{n,s}:= \frac{4^s \Gamma(n/2+s)}{\pi^{n/2}|\Gamma(-s)|}$. Many other equivalent definitions are also possible (see \cite{Kw17}). The fractional Laplacian verifies the UCP (unique continuation property): If $u\in H^r(\mathbb R^n)$ is such that $u=(-\Delta)^su=0$ in a non-empty open set $V$, then $u\equiv 0$ in $\mathbb R^n$. The proof of this fact can be found in \cite{GSU20} for $s\in(0,1)$ and in \cite{CMR21} for higher exponents. The latter paper also proves the following fractional Poincar\'e inequality: If $s\geq t\geq 0$ and $u\in H_K^s(\R^\dimens)$ for a compact set $K\subset\R^n$, then there exists a constant $\widetilde{c}=\widetilde{c}(n, K, s)> 0$ such that
\begin{equation*}
\aabs{(-\Delta)^{t/2}u}_{L^2(\R^\dimens)}\leq \widetilde{c}\aabs{(-\Delta)^{s/2}u}_{L^2(\R^\dimens)}.
\end{equation*}

We also need the fractional gradient and divergence operators $\nabla^s$ and $(\nabla\cdot)^s$. These were first introduced in the nonlocal vector calculus developed in \cite{DGLZ12, DGLZ13}, and were later studied in their present form in \cite{C20, C20a, CMR21, CRZ22, CdHS22} and more. Let $u\in C^\infty_c(\mathbb R^n)$ and $s\in (0,1)$. The fractional gradient of $u$ is the two-points function given by 
$$ \nabla^s u (x,y) := (u(y)-u(x))\otimes \zeta(x,y), \quad\mbox{ where } \zeta(x,y):=\frac{C_{n,s}^{1/2}}{\sqrt{2}} \frac{x-y}{|x-y|^{n/2+s+1}}.$$ This definition can be uniquely extended to act as $H^s(\mathbb R^n)\rightarrow L^2(\mathbb R^{2n})$. The adjoint of $\nabla^s$, which is indicated by $(\nabla\cdot)^s$ and acts as $L^2(\mathbb R^{2n})\rightarrow H^{-s}(\mathbb R^n)$, is by definition the fractional divergence. One can observe that the property $$(\nabla\cdot)^s\nabla^s u = (-\Delta)^s u$$ holds in $H^s(\mathbb R^n)$. Because of their mapping properties and of the particular relation they have among themselves and with the Laplacian, the fractional gradient and divergence can be though of as nonlocal counterparts of the classical gradient and divergence operators.

\subsection{H\"older spaces} \label{holder-section} In this Section we introduce the H\"older spaces and their main properties, following \cite{Ta96} (see also \cite{CdHS22}). If $r\in[0,1)$ and $k\in\N_0$, we define
\begin{gather}
    C^r(\mathbb R^n) :=\{ u: \R^n \rightarrow \C \;:\; u \mbox{ bounded and } |u(x)-u(y)|\leq C|x-y|^{r} \mbox{ for all } x,y\in\mathbb R^n\}, \\ C^k(\mathbb R^n):= \{ u: \R^n \rightarrow \C \;:\; D^\beta u \mbox{ bounded continuous for all } |\beta|\leq k\}, \\ C^{r+k}(\mathbb R^n):=\{ u\in C^k(\mathbb R^n) \;:\; D^\beta u \in C^r(\mathbb R^n) \mbox{ for all } |\beta|=k \}.
\end{gather}
The H\"older norm of $u\in C^{r+k}(\R^n)$ is defined as
$$ \|u\|_{C^{r+k}(\R^n)} := \max_{|\beta|\leq k}\|D^\beta u\|_{L^\infty(\R^n)} + \max_{|\beta|=k}\sup_{x\neq y}\frac{|D^\beta u(x)-D^\beta u(y)|}{|x-y|^r}. $$
Moreover, the following properties are known to hold:
\begin{enumerate}[label=(H\arabic*)]
    \item\label{H1} Continuous embedding property:
    
    \noindent If $r,r'\in\R^+$ are such that $r\leq r'$, then $C^{r'}(\mathbb R^n)\hookrightarrow C^r(\mathbb R^n)$. \vspace{2mm}
    
    \item\label{H2} Closure under composition with smooth functions:
    
    \noindent If $r\in\R^+$, $u \in C^r(\R^n)$ and $F\in C^\infty(\R)$, then $F(u) \in C^r(\R^n)$ as well. \vspace{2mm}
    
     \item\label{H3} Algebra property:
    
    \noindent If $r\in\R^+\setminus\mathbb Z^+$ and $u,v\in C^r(\R^n)$, then $uv\in C^r(\R^n)$ as well. \vspace{2mm}
    
    \item\label{H4} Mapping property for pseudodifferential operators:
    
    \noindent If $r\in\R^+\setminus\mathbb Z^+$ and $m\in \R$ verify $r-m\in\R^+\setminus\mathbb Z^+$, and $\Psi\in OPS^m_{1,0}$ is a $\Psi$DO with symbol in the H\"ormander class $S^m_{1,0}$, then $\Psi$ maps $C^r(\R^n)$ continuously to $C^{r-m}(\R^n)$. \vspace{2mm}
    
    \item\label{H5} Mapping property for $(-\Delta)^{s-1}\partial_i\partial_j$:
    
    \noindent If $r\in\R^+\setminus\mathbb Z^+$ and $s\in (0,1)$ verify $r-2s\in\R^+\setminus\mathbb Z^+$, then $(-\Delta)^{s-1}\partial_i\partial_j$ maps $C^r(\R^n)$ continuously to $C^{r-2s}(\R^n)$. 
\end{enumerate}

We will also make use of the following Lemma:

\begin{lemma}\label{holder-sobo-lemma}
Let $s\in(0,1)$ and $\eps>0$. If $h\in C^{s+\eps}(\R^n)$ and $u\in H^s(\R^n)$, then $hu\in H^s(\R^n)$ as well, and there exists a constant $C=C(n,s,\eps)>0$ such that $$ \|hu\|_{H^s(\R^n)} \leq C \|h\|_{C^{s+\eps}(\R^n)}\|u\|_{H^s(\R^n)}. $$
\end{lemma}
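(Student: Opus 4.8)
The plan is to work with the Gagliardo (Aronszajn--Slobodeckij) characterization of $H^s(\R^n)$ for $s\in(0,1)$: the norm $\|v\|_{H^s(\R^n)}^2$ is comparable, with constants depending only on $n$ and $s$, to $\|v\|_{L^2(\R^n)}^2+[v]_s^2$, where $[v]_s^2:=\iint_{\R^{2n}}|v(x)-v(y)|^2|x-y|^{-n-2s}\,dx\,dy$. The $L^2$ part is immediate, since $\|hu\|_{L^2(\R^n)}\leq\|h\|_{L^\infty(\R^n)}\|u\|_{L^2(\R^n)}\leq\|h\|_{C^{s+\eps}(\R^n)}\|u\|_{L^2(\R^n)}$, so the entire content of the lemma is to bound the seminorm $[hu]_s$.

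First I would reduce to the case $s+\eps<1$. Indeed, if $s+\eps\geq 1$, then by the embedding \ref{H1} we have $C^{s+\eps}(\R^n)\hookrightarrow C^{s+\eps'}(\R^n)$ with $\eps':=\min\{\eps,(1-s)/2\}>0$ and $\|h\|_{C^{s+\eps'}(\R^n)}\leq C\|h\|_{C^{s+\eps}(\R^n)}$, so it suffices to prove the estimate for the exponent $s+\eps'<1$. Under this reduction I would use the elementary identity $h(x)u(x)-h(y)u(y)=h(x)(u(x)-u(y))+(h(x)-h(y))u(y)$ together with $(a+b)^2\leq 2a^2+2b^2$, obtaining $[hu]_s^2\leq 2\|h\|_{L^\infty(\R^n)}^2[u]_s^2+2J$, where $J:=\iint_{\R^{2n}}|h(x)-h(y)|^2|u(y)|^2|x-y|^{-n-2s}\,dx\,dy$.

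The term $J$ I would split according to whether $|x-y|<1$ or $|x-y|\geq 1$. On $\{|x-y|<1\}$, the H\"older bound $|h(x)-h(y)|\leq\|h\|_{C^{s+\eps}(\R^n)}|x-y|^{s+\eps}$ makes the integrand at most $\|h\|_{C^{s+\eps}(\R^n)}^2|u(y)|^2|x-y|^{-n+2\eps}$; integrating first in $x$ over the unit ball (finite because $-n+2\eps>-n$) and then in $y$ yields a contribution $\leq C(n,\eps)\|h\|_{C^{s+\eps}(\R^n)}^2\|u\|_{L^2(\R^n)}^2$. On $\{|x-y|\geq 1\}$, I would crudely bound $|h(x)-h(y)|\leq 2\|h\|_{L^\infty(\R^n)}$, so the integrand is at most $4\|h\|_{L^\infty(\R^n)}^2|u(y)|^2|x-y|^{-n-2s}$; integrating in $x$ over $\{|x-y|\geq 1\}$ (finite because $-n-2s<-n$) and then in $y$ gives a contribution $\leq C(n,s)\|h\|_{L^\infty(\R^n)}^2\|u\|_{L^2(\R^n)}^2$. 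Adding the two pieces, using $\|h\|_{L^\infty(\R^n)}\leq\|h\|_{C^{s+\eps}(\R^n)}$, and combining with the bound on $[u]_s^2$ and on the $L^2$ part gives $\|hu\|_{H^s(\R^n)}^2\leq C(n,s,\eps)\|h\|_{C^{s+\eps}(\R^n)}^2\|u\|_{H^s(\R^n)}^2$; taking square roots concludes.

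I do not expect a genuine obstacle: this is a routine product estimate. The only mild point to be careful about is the reduction to $s+\eps<1$ (or, equivalently, rephrasing the near-diagonal bound with any exponent $\sigma\in(s,1)$ satisfying $\|h\|_{C^{\sigma}(\R^n)}\lesssim\|h\|_{C^{s+\eps}(\R^n)}$), which is precisely what guarantees the convergence of $\int_{|z|<1}|z|^{-n-2s+2\sigma}\,dz$ in the short-range region.
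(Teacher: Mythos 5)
Your proof is correct and follows essentially the same route as the paper: both estimate the Gagliardo seminorm of $hu$ via the decomposition $h(x)u(x)-h(y)u(y)=h(x)(u(x)-u(y))+(h(x)-h(y))u(y)$ and then split the difference-quotient integral into the near-diagonal region (H\"older bound on $h$) and the far region ($L^\infty$ bound on $h$). Your explicit reduction to the case $s+\eps<1$ via the embedding \ref{H1} is a small extra precaution that the paper's argument leaves implicit, but it does not change the nature of the proof.
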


\begin{proof}
Let $u,h$ be as in the statement. Then we know that
$$ \|hu\|^2_{H^s(\R^n)}\lesssim \|hu\|^2_{L^2(\R^n)} + [hu]^2_{H^s(\R^n)}, $$
(see e.g. \cite{DPV11}) where the Gagliardo seminorm is given by
$$ [hu]^2_{H^s(\R^n)}:= \int_{\R^n}\int_{\R^n} \frac{|h(x)u(x)-h(y)u(y)|^2}{|x-y|^{n+2s}}dxdy. $$
Since $h\in L^\infty(\R^n)$, the first term on the right hand side is immediately estimated as $$ \|hu\|_{L^2(\R^n)}\leq \|h\|_{L^\infty(\R^n)}\|u\|_{L^2(\R^n)}\leq \|h\|_{L^\infty(\R^n)}\|u\|_{H^s(\R^n)}. $$
For the seminorm we compute
\begin{align*}
    [hu]^2_{H^s(\R^n)} & \lesssim \int_{\R^n}\int_{\R^n} \frac{|h(x)|^2|u(x)-u(y)|^2}{|x-y|^{n+2s}}dxdy+ \int_{\R^n}\int_{\R^n} \frac{|h(x)-h(y)|^2|u(y)|^2}{|x-y|^{n+2s}}dxdy =: I_1+I_2,
\end{align*}
and then observe that 
\begin{align*}
     I_1 & \leq \|h\|_{L^\infty(\R^n)}^2[u]^2_{H^s(\R^n)} \leq \|h\|_{L^\infty(\R^n)}^2\|u\|^2_{H^s(\R^n)},
\end{align*}
so that we only need to estimate $I_2$. With a change of variables we get
\begin{align*}
    I_2 & = \int_{\R^n}|u(y)|^2\int_{\R^n} \frac{|h(y+z)-h(y)|^2}{|z|^{n+2s}}dzdy,
\end{align*}
and then we split the second integral as follows, where $B$ is the unitary ball centered at the origin:
\begin{align*}
    \int_{\R^n} \frac{|h(y+z)-h(y)|^2}{|z|^{n+2s}}dz & = \int_{B} \frac{|h(y+z)-h(y)|^2}{|z|^{n+2s}}dz + \int_{\R^n\setminus B} \frac{|h(y+z)-h(y)|^2}{|z|^{n+2s}}dz \\ & \lesssim \|h\|_{C^{s+\eps}(\R^n)}^2 \int_{B} \frac{|z|^{2s+2\eps}}{|z|^{n+2s}}dz + \|h\|^2_{L^\infty(\R^n)}\int_{\R^n\setminus B} \frac{1}{|z|^{n+2s}}dz.
\end{align*}
Observe that the remaining integrals converge to constant values depending on $n,s,\eps$. Thus
$$ I_2 \lesssim \|h\|_{C^{s+\eps}(\R^n)}^2\int_{\R^n}|u(y)|^2dy = \|h\|_{C^{s+\eps}(\R^n)}^2\|u\|^2_{L^2(\R^n)}\leq \|h\|_{C^{s+\eps}(\R^n)}^2\|u\|^2_{H^s(\R^n)}. $$
\end{proof}

\subsection{Spaces of tensors}\label{subsec:sequences}
If $A=A_{\alpha_{1}, ... , \alpha_{m}}$ with $\alpha_i \in \{1, ..., a_i\}$ for $i=1, ..., m$ and $a\in \mathbb N_{\infty}^m$ for $m\in\N$ is any (extended) tensor whose elements belong to $C^r$, $r\in\R$, we define its $C^r$ norm as $$\|A\|_{C^r}:=\sum_{\alpha_1=1}^{a_1}... \sum_{\alpha_m=1}^{a_m} \|A_{\alpha_{1}, ... , \alpha_{m}}\|_{C^r}.$$
Similarly, if $B=B_{\beta_{1}, ... , \beta_{n}}$ with $\beta_j \in \{1, ..., b_j\}$ for $j=1, ..., n$ and $b\in \mathbb N_{\infty}^n$ for $n\in\N$ is any (extended) tensor whose elements belong to $H^r$, we define its $H^r$ norm as 
$$\|B\|_{H^r}:=\sum_{\beta_1=1}^{b_1}... \sum_{\beta_n=1}^{b_n} \|B_{\beta_{1}, ... , \beta_{n}}\|_{H^r}.$$
The result of Lemma \ref{holder-sobo-lemma} extends to products of tensors: if $A\in C^{r+\eps}$ and $B\in H^r$, then $A \otimes B, A\odot B$ and $A\cdot_k B$ all belong to $H^r$ whenever the operations make sense, since \begin{equation}
    \begin{split}
        \|A\otimes B\|_{H^r} & = \sum_{\alpha_1,..., \alpha_m,\beta_1,...,\beta_n}\| A_{\alpha_{1}, ... , \alpha_{m}}B_{\beta_{1}, ... , \beta_{n}} \|_{H^r} \\ & \lesssim  \sum_{\alpha_1,..., \alpha_m,\beta_1,...,\beta_n}\| A_{\alpha_{1}, ... , \alpha_{m}}\|_{C^{r+\eps}}\|B_{\beta_{1}, ... , \beta_{n}} \|_{H^r} \\ & =  \sum_{\alpha_1,..., \alpha_m}\| A_{\alpha_{1}, ... , \alpha_{m}}\|_{C^{r+\eps}} \sum_{\beta_1,...,\beta_n}\|B_{\beta_{1}, ... , \beta_{n}} \|_{H^r}  = \|A\|_{C^{r+\eps}}\|B\|_{H^r},
    \end{split}
\end{equation}
\begin{equation}
    \begin{split}
        \|A\odot B\|_{H^r} & =  \sum_{\alpha_1,..., \alpha_m}\| A_{\alpha_{1}, ... , \alpha_{m}}B_{\alpha_{1}, ... , \alpha_{m}} \|_{H^r}  \\ & \lesssim  \sum_{\alpha_1,..., \alpha_m}\| A_{\alpha_{1}, ... , \alpha_{m}}\|_{C^{r+\eps}}\|B_{\alpha_{1}, ... , \alpha_{m}} \|_{H^r} \\ & \leq \sum_{\alpha_1,..., \alpha_m}\| A_{\alpha_{1}, ... , \alpha_{m}}\|_{C^{r+\eps}} \sum_{\beta_1,...,\beta_n}\|B_{\beta_{1}, ... , \beta_{n}} \|_{H^r}  = \|A\|_{C^{r+\eps}}\|B\|_{H^r},
    \end{split}
\end{equation}
and
\begin{equation}
    \begin{split}
        \|A\cdot_k B\|_{H^r} & =  \sum_{\alpha_1,..., \alpha_{m-k},\beta_{k+1},...,\beta_n}\| \sum_{\beta_1,...,\beta_k}A_{\alpha_1, ..., \alpha_{m-k}, \beta_1, ..., \beta_k} B_{\beta_1, ..., \beta_n} \|_{H^r} \\ & \leq  \sum_{\alpha_1,..., \alpha_{m-k},\beta_{1},...,\beta_n}\left\| A_{\alpha_1, ..., \alpha_{m-k}, \beta_1, ..., \beta_k} B_{\beta_1, ..., \beta_n} \right\|_{H^r}  \\ & \lesssim \sum_{\alpha_1,..., \alpha_{m-k},\beta_{1},...,\beta_n}\left\| A_{\alpha_1, ..., \alpha_{m-k}, \beta_1, ..., \beta_k}\|_{C^{r+\eps}} \|B_{\beta_1, ..., \beta_n} \right\|_{H^r} \\& \leq \sum_{\alpha_1,..., \alpha_m}\| A_{\alpha_{1}, ... , \alpha_{m}}\|_{C^{r+\eps}} \sum_{\beta_1,...,\beta_n}\|B_{\beta_{1}, ... , \beta_{n}} \|_{H^r}  = \|A\|_{C^{r+\eps}}\|B\|_{H^r}.
    \end{split}
\end{equation}

Moreover, if $A\neq 0$ we have $A/|A|^2\in C^{r+\eps}(\R^n)$ by \ref{H2}, since the function $F(x)=x/|x|^2$ is smooth away from the origin. Therefore by the above computation 
\begin{equation}
    \begin{split}
        \|B\|_{H^r} = \left\|\frac{A}{|A|^2}\cdot_m (A\otimes B)\right\|_{H^r} & \lesssim \left\|\frac{A}{|A|^2}\right\|_{C^{r+\eps}}\left\|A\otimes B\right\|_{H^r}.
    \end{split}
\end{equation}
It now easily follows that if $A\in C^{r+\eps}$, then $A\otimes H^r$ is a Hilbert subspace of $H^r$.

\begin{remark}
We might as well have chosen to define 
\begin{equation}\label{alternative-definition}
    \|A\|^2_{C^r}:=\sum_{\alpha_1=1}^{a_1}... \sum_{\alpha_m=1}^{a_m} \|A_{\alpha_{1}, ... , \alpha_{m}}\|^2_{C^r},
\end{equation}
and similarly for the $H^r$ norm of a tensor. If the tensors are not extended, i.e. if $a\in \mathbb N^m$ rather than $\mathbb N_{\infty}^m$, the two definitions of course coincide in light of the Cauchy-Schwarz inequality for series, since
$$ \sum_{\alpha_1=1}^{a_1}... \sum_{\alpha_m=1}^{a_m} \|A_{\alpha_{1}, ... , \alpha_{m}}\|_{C^r} \leq \left(\sum_{\alpha_1=1}^{a_1}... \sum_{\alpha_m=1}^{a_m} \|A_{\alpha_{1}, ... , \alpha_{m}}\|^2_{C^r}\right)^{1/2}\prod_{j=1}^m a_j^{1/2} $$
and
$$ \sum_{\alpha_1=1}^{a_1}... \sum_{\alpha_m=1}^{a_m} \|A_{\alpha_{1}, ... , \alpha_{m}}\|^2_{C^r} \leq \left(\sum_{\alpha_1=1}^{a_1}... \sum_{\alpha_m=1}^{a_m} \|A_{\alpha_{1}, ... , \alpha_{m}}\|_{C^r}\right)^2. $$
However, since the first inequality does not hold for extended tensors, the two definitions are not equivalent for $a\in\mathbb N_{\infty}^m$. In particular, definition \eqref{alternative-definition} would not allow us to get the desired estimate for $\|A\cdot_k B\|_{H^r}$.
\end{remark}

\section{The anisotropic fractional conductivity equation}\label{sec-oper}

\subsection{Assumptions}\label{subsec-ass}
Any square matrix $A$ depending on two variables $x,y$ is linked to two different concepts of symmetry. One can define the \emph{matrix-wise symmetric part} $A_{ms}$ of $A$ as
$$ A_{ms}(x,y) := \frac{A(x,y) + A^T(x,y)}{2}, $$
where $T$ indicates transposition, and the \emph{variable-wise symmetric part} $A_{vs}$ of $A$ as
$$ A_{vs}(x,y) := \frac{A(x,y) + A(y,x)}{2}. $$
One sees that $A_{ms}$ is a symmetric matrix in the sense that $A_{ms} = A_{ms}^T$, while $A_{vs}$ is a symmetric function of the variables $x,y$ in the sense that $A_{vs}(x,y)= A_{vs}(y,x)$. We also define the  \emph{(matrix-wise and variable-wise) antisymmetric parts} of $A$ as $$ A_{ma} := A-A_{ms}, \qquad A_{va} := A-A_{vs}. $$
Thus we can write $$A = A_{vs} + A_{va} = (A_{vs})_{ms} + (A_{vs})_{ma} + A_{va} = A_s + A_a,$$ where $A_s := (A_{vs})_{ms}$ is symmetric both matrix-wise and variable-wise, and $A_a := (A_{vs})_{ma} + A_{va}$.\\

Let now $s\in(0,1)$ and $\Omega\subset\R^n$ be bounded and open. Moreover, let $A\in L^\infty(\R^{2n}, \R^{n\times n})$ verify the following assumptions:

\begin{enumerate}[label=(A\arabic*)]
    \item\label{ass-exterior} There exist two families of matrix-wise symmetric functions $\{\beta_k\}_{k\in\N}\subset L^\infty(\R^n,\R^{n\times n})$, each constant in $\Omega_e$, and $\{\phi_k\}_{k\in\N}\subset L^2(\Omega,\C^{n\times n})$, each compactly supported in $\Omega$, such that for almost every $(x,y)\in\R^{2n}$ \begin{equation}\label{first-decomp}A_s(x,y)= \tilde a(x,y) + \sum_{k\in\N} \phi_k(x)\odot\phi_k(y),\end{equation}
    where
    $$ \tilde a(x,y):= \sum_{k\in\N} \beta_k(x)\odot\beta_k(y) $$
    coincides with $A_s(x,y)$ whenever $(x,y)\not\in\Omega^2$. We also define a third family of matrix-wise symmetric functions $\{\Phi_k\}_{k\in\N}$ given by
    $$ \Phi_{2k}:= \beta_k, \qquad \Phi_{2k+1}:=\phi_k \qquad\qquad\mbox{for all } k\in\N, $$
    so that we can write
    \begin{equation}\label{decomposition}
    A_s(x,y) = \sum_{k\in\N} \Phi_k(x) \odot \Phi_k(y).
\end{equation}
    \item\label{ass-positive} $A_s$ is uniformly positive definite, i.e. there exists a constant $\nu>0$ such that $$A_s(x,y) :(\xi\otimes \xi) \geq \nu |\xi|^2,\qquad \mbox{ for all } \xi\in \R^{n} \mbox{ and } x,y\in\R^n.$$
    \item\label{ass-Phi} There exists $\eps > 0$ such that $\Phi\in C^{2s+\eps}(\R^n)$.
\end{enumerate}


In the next section we will use these assumptions in order to construct the anisotropic fractional conductivity operator having $A$ as coefficient matrix. The rest of this section is dedicated to comments about the assumptions.

\begin{itemize}
    \item Assumption \ref{ass-Phi} implies that the functions $\beta_k$ and $\phi_k$ belong to $C^{2s+\eps}(\R^n)$ for all $k\in\N$. It also gives $A_s\in C^{2s+\eps}(\R^{2n})$, since
\begin{align*}
    \|A_s\|_{C^{2s+\eps}(\R^{2n})} & \leq \sum_{k\in\N}\| \Phi_k(x)\odot\Phi_k(y)  \|_{C^{2s+\eps}(\R^{2n})} \\ & = \sum_{k\in\N}\sum_{i,j=1}^n\| \Phi_{k,ij}(x)\Phi_{k,ij}(y)  \|_{C^{2s+\eps}(\R^{2n})}  \\ & \lesssim \sum_{k\in\N}\sum_{i,j=1}^n \|\Phi_{k,ij}\|_{C^{2s+\eps}(\R^{2n})}^2 \\ & \lesssim \sum_{k\in\N}\sum_{i,j=1}^n \|\Phi_{k,ij}\|_{C^{2s+\eps}(\R^n)}^2 \leq \|\Phi\|_{C^{2s+\eps}(\R^n)}^2,
\end{align*} where at the third step we used property \ref{H3}, and at the following one just the definition of H\"older norm. Similarly, $\tilde a\in C^{2s+\eps}(\R^{2n})$.
    \item The particular structure of the exterior values $\tilde a$ of $A_s$ emerges naturally as a generalization of the exterior condition for the isotropic fractional conductivity equation studied in \cite{C20}. In that case it holds  $$A(x,y) = A_s(x,y) = \gamma^{1/2}(x)\gamma^{1/2}(y) Id = (\gamma^{1/2}(x)Id) \odot (\gamma^{1/2}(y)Id),$$
and thus it is sufficient to take $\tilde a = A$. 
    \item The existence of a sequence $\{\phi_k\}_{k\in\N}$ as in \ref{ass-exterior} such that \eqref{first-decomp} holds can be proved in the assumption that the scalar function $(A_s-\tilde a)_{ij}$ is compactly supported in $\Omega^2$. Fix $i,j\in\{1,...,n\}$, and let $\psi:= (A_s-\tilde a)_{ij}$. In this case we define the Hilbert-Schmidt integral operator $\Psi: L^2(\Omega) \rightarrow L^2(\Omega)$ by
$$ \Psi u(x) := \int_{\Omega} \psi(x,y)u(y) dy. $$ 
Since $\psi$ is real-valued, symmetric in $x,y$, and compactly supported, the operator $\Psi$ is compact and self-adjoint. By the spectral theorem there exists an orthonormal basis $ \{\phi_i\}_{i\in\N}$ of $L^2(\Omega)$ composed of eigenfunctions of $\Psi$ with real eigenvalues $\{\lambda_i\}_{i\in \N}$ such that $|\lambda_1|\geq |\lambda_2|\geq ... \geq 0$. One immediately sees that the new double sequence $\{\phi_i(x)\phi_j(y)\}_{i,j\in\N}$ is an orthonormal basis of $L^2(\Omega^2)$. By writing $\psi$ in this basis, we see that for all $x,y\in\Omega$
\begin{align*}
    \psi(x,y) & = \sum_{i,j} \left(\int_{\Omega\times\Omega}\psi(w,z)\phi_i(w)\phi_j(z) dwdz \right)\phi_i(x)\phi_j(y) \\ & = \sum_{i,j} \left(\int_{\Omega}\phi_i(w)\Psi\phi_j(w) dw \right)\phi_i(x)\phi_j(y) \\ & = \sum_{i,j}\lambda_j  \left(\int_{\Omega}\phi_i(w)\phi_j(w) dw \right)\phi_i(x)\phi_j(y) \\ & = \sum_{i} \lambda_i \phi_i(x)\phi_i(y),
\end{align*}
since we have the equalities $$\Psi\phi_j = \lambda_j\phi_j, \quad\mbox{for all } j\in \N,\qquad \mbox{and}\qquad \int_{\Omega}\phi_i(w)\phi_j(w) dw = \delta_{ij}.$$ If $\phi'_i$ is the extension by $0$ of $\sqrt{\lambda_i}\phi_i$ from $\Omega$ to $\R^n$, then the decomposition
$$ \psi(x,y)=\sum_{i} \phi'_i(x)\phi'_i(y) $$ holds for all $x,y\in \R^n$, and formula \eqref{first-decomp} follows. 
Thus we see that the decomposition given in equation \eqref{first-decomp} is actually an assumption just on the exterior values of $A_s$.

If in particular $\psi$ is a positive semidefinite kernel in $\Omega^2$, i.e. $$ \int_{\Omega}\int_{\Omega}\psi(x,y)g(x)g(y)dxdy \geq 0, \qquad \mbox{for all } g\in L^2(\Omega),$$ then by Mercer's theorem \cite[Section 8.7]{JR82} all the eigenvalues $\lambda_i$ are non-negative,  
$$\mbox{tr}\Psi:=\int_{\Omega}\psi(x,x)dx = \sum_i\lambda_i < \infty,$$
and the functions $\phi'_i$ are real valued.

\item  If $A_1\sim A_2$ as in Definition \ref{def:gauge}, then the decomposition \eqref{decomposition} of $A_{2,s}$ can be taken such that $\Phi_{2,k}(x)=(1+\rho(x))\Phi_{1,k}(x)$ for all $x\in \R^n$ and $k\in\N$. In particular, $\tilde a_1 = \tilde a_2$ on $(\Omega_e)^2$.

\end{itemize}

\subsection{The operator}
For this subsection we do not need any of the assumptions \ref{ass-exterior}-\ref{ass-Phi}. The fact that $A\in L^\infty$, the mapping properties of the fractional gradient $\nabla^s$ and the Cauchy-Schwartz inequality are enough to imply that $A\cdot\nabla^su\in L^2(\R^{2n})$ for all $u\in H^s(\R^n)$. With this in mind, we can define the \emph{anisotropic fractional conductivity operator} $\mathbf C^s_A : H^s(\R^n) \rightarrow H^{-s}(\R^n)$ as 
$$ \mathbf C^s_A u := (\nabla\cdot)^s (A(x,y)\cdot \nabla^s u) $$ for $u\in H^s(\R^n)$. If there exists $\sigma: \R^{2n}\rightarrow \R$ such that $A(x,y) = \sigma(x,y)Id$, the fractional conductivity operator is said to be \emph{isotropic}, and
$$ \mathbf C^s_{\sigma Id} u= (\nabla\cdot)^s(\sigma(x,y) \nabla^s u). $$
If the two-points function $\sigma$ admits the decomposition $\sigma(x,y)=\gamma^{1/2}(x)\gamma^{1/2}(y)$ for some function $\gamma: \R^n \rightarrow \R$, then we recover the usual fractional conductivity operator $\mathbf C^s_\gamma$ studied in \cite{C20}. Finally, in the case $\gamma(x) \equiv 1$ our operator reduces to the fractional Laplacian $(-\Delta)^s$.

\begin{remark}\label{natural-gauge}For any $u,v\in H^s$ we see that
\begin{align*}
    \langle \mathbf C^s_Au, v \rangle & = \langle A, \nabla^sv \otimes \nabla^s u\rangle \\ & = \langle A_s, \nabla^sv \otimes \nabla^s u\rangle + \langle (A_{vs})_{ma}, \nabla^sv \otimes \nabla^s u\rangle + \langle A_{va}, \nabla^sv \otimes \nabla^s u\rangle .
\end{align*}
Observe that the matrix $\nabla^sv\otimes\nabla^su$ is symmetric both matrix- and variable-wise. This makes the last two terms on the right hand side vanish, and we are left with
\begin{equation}\label{symmetry}
     \langle \mathbf C^s_Au, v \rangle =  \langle A_s, \nabla^sv \otimes \nabla^s u\rangle = \langle \mathbf C^s_{A_s}u, v \rangle .
\end{equation}
This means that the operator $\mathbf C^s_A$ actually contains no information about $A_a$, which is therefore unrecoverable. Thus the anisotropic fractional conductivity equation possesses a natural gauge. This is reminiscent of the situation emerging for the fractional magnetic Schr\"odinger equation studied in \cite{C20a}. 
\end{remark}

\begin{lemma}
The operator $\mathbf C^s_A$ is self-adjoint.
\end{lemma}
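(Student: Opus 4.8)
The plan is to realize $\mathbf C^s_A$ as a bounded operator $H^s(\R^n)\to H^{-s}(\R^n)$ whose associated bilinear form is symmetric; since the duality pairing $\langle\cdot,\cdot\rangle_{H^{-s}(\R^n)\times H^s(\R^n)}$ is symmetric on real spaces, this is precisely self-adjointness. Boundedness requires nothing new: $\nabla^s$ maps $H^s(\R^n)$ continuously into $L^2(\R^{2n})$, multiplication by $A\in L^\infty(\R^{2n},\R^{n\times n})$ is bounded on $L^2(\R^{2n})$, and $(\nabla\cdot)^s$ maps $L^2(\R^{2n})$ continuously into $H^{-s}(\R^n)$. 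So the whole content is to check that $\langle\mathbf C^s_A u,v\rangle=\langle\mathbf C^s_A v,u\rangle$ for all $u,v\in H^s(\R^n)$.

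First I would move the fractional divergence onto $v$ using the adjoint relation $(\nabla^s)^*=(\nabla\cdot)^s$, obtaining, exactly as in the computation of Remark \ref{natural-gauge},
$$ \langle\mathbf C^s_A u,v\rangle=\langle A\cdot\nabla^s u,\nabla^s v\rangle_{L^2(\R^{2n})}=\langle A,\nabla^s v\otimes\nabla^s u\rangle, $$
the last pairing being the Frobenius contraction integrated over $\R^{2n}$. The key structural fact, already noted in that remark, is that the matrix-valued two-point function $\nabla^s v\otimes\nabla^s u$ is symmetric both matrix-wise and variable-wise: since $\nabla^s u(x,y)=(u(y)-u(x))\zeta(x,y)$ and $\nabla^s v(x,y)=(v(y)-v(x))\zeta(x,y)$ are both scalar multiples of the single vector $\zeta(x,y)$, one has $\nabla^s v\otimes\nabla^s u=(v(y)-v(x))(u(y)-u(x))\,\zeta(x,y)\otimes\zeta(x,y)$, a symmetric rank-one matrix which is moreover invariant under $(x,y)\mapsto(y,x)$ because $\zeta(y,x)=-\zeta(x,y)$.

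From here the symmetry of the form follows at once. The displayed rank-one expression is manifestly unchanged when $u$ and $v$ are interchanged, so $\nabla^s u\otimes\nabla^s v=\nabla^s v\otimes\nabla^s u$ and therefore $\langle\mathbf C^s_A v,u\rangle=\langle A,\nabla^s u\otimes\nabla^s v\rangle=\langle A,\nabla^s v\otimes\nabla^s u\rangle=\langle\mathbf C^s_A u,v\rangle$. Equivalently, one may first invoke \eqref{symmetry} to replace $A$ by $A_s$ (this is where the variable-wise symmetry of $\nabla^s v\otimes\nabla^s u$ enters), and then use the matrix-wise symmetry $A_s=A_s^T$ together with $\nabla^s u\otimes\nabla^s v=(\nabla^s v\otimes\nabla^s u)^T$ to conclude $\langle A_s,\nabla^s u\otimes\nabla^s v\rangle=\langle A_s^T,\nabla^s v\otimes\nabla^s u\rangle=\langle A_s,\nabla^s v\otimes\nabla^s u\rangle$. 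Either way $\mathbf C^s_A$ is self-adjoint, and in fact $\mathbf C^s_A=\mathbf C^s_{A_s}$.

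I do not expect a genuine obstacle here; the only thing to be careful about is that $A$ need not itself be symmetric, so one cannot argue from ``$A=A^T$'' but must instead exploit the special rank-one structure of $\nabla^s$ (equivalently, pass through $A_s$ via Remark \ref{natural-gauge}). The verification of boundedness and of the $L^2$-pairing identities is routine given the mapping properties of $\nabla^s$ and $(\nabla\cdot)^s$ recalled in Section \ref{sec-prel}.
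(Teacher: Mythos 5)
Your proof is correct and rests on the same computation the paper uses (the one in Remark \ref{natural-gauge}: $\langle \mathbf C^s_A u,v\rangle=\langle A,\nabla^s v\otimes\nabla^s u\rangle$ with the rank-one, variable-wise symmetric structure $\nabla^s v\otimes\nabla^s u=(v(y)-v(x))(u(y)-u(x))\,\zeta\otimes\zeta$); your ``equivalent'' second route through $A_s$ is precisely the paper's proof. The only cosmetic difference is that you conclude symmetry of the pairing directly from the invariance of $\nabla^s v\otimes\nabla^s u$ under swapping $u$ and $v$, while the paper first replaces $A$ by $A_s$ and then invokes the symmetries of $A_s$ — the substance is identical.
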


\begin{proof}
By Remark \ref{natural-gauge} we have
$$ \langle \mathbf C^s_Au, v \rangle = \langle \mathbf C^s_{A_s}u, v \rangle = \langle u,\mathbf C^s_{A_s}v \rangle = \langle u,\mathbf C^s_Av \rangle ,  $$
since the symmetries of $A_s$ ensure that $C^s_{A_s}$ is itself self-adjoint.
\end{proof}

\subsection{The reduction lemma}

The next result is a reduction lemma, by means of which we can write $\mathbf C^s_A$ as a combination of fractional Laplacian operators acting on sequences. This result is related to the fractional Liouville reduction studied in \cite{C20} for the isotropic, separable case, and it also resembles the reduction lemma presented in \cite{CdHS22} for the fractional elasticity equation.

\begin{lemma}\label{reduction-lemma}
In weak sense it holds that
$$ \mathbf C^s_A u = \Phi \tridots \left\{(-\Delta)^{s-1}D(\Phi u) + (\Phi u)\tridots  Q\right\},$$
where $D:= - \frac{\left(\Delta Id + 2s\nabla^2\right)\odot}{n+2s}$ is a second order operator acting between $n\times n$ matrices and $Q:=- \frac{\Phi\otimes (-\Delta)^{s-1}D\Phi}{|\Phi|^2}$ is the transformed potential.
\end{lemma}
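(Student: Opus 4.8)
The plan is to reduce to a single scalar identity and then sum. Since $A_s(x,y)=\sum_k \Phi_k(x)\odot\Phi_k(y)$ by assumption \ref{ass-exterior}, and since by Remark \ref{natural-gauge} we have $\mathbf C^s_A u = \mathbf C^s_{A_s}u$, it suffices to work with $A_s$. For fixed $u,v\in H^s(\R^n)$, write out the weak pairing $\langle \mathbf C^s_A u,v\rangle = \langle A_s,\nabla^s v\otimes\nabla^s u\rangle$ using the explicit form $\nabla^s u(x,y)=(u(y)-u(x))\otimes\zeta(x,y)$. Because $\odot$ is entrywise and $\zeta$ is scalar-valued in each entry of the tensor product structure, the pairing splits as a double sum over $k$ and over matrix indices $i,j$ of scalar integrals of the form $\int\!\!\int \Phi_{k,ij}(x)\Phi_{k,ij}(y)(u(y)-u(x))(v(y)-v(x))\zeta_i(x,y)\zeta_j(x,y)\,dx\,dy$. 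The first step is thus purely bookkeeping: isolate one scalar kernel factor $\Phi_{k,ij}(x)\Phi_{k,ij}(y)$ and one scalar function $\gamma:=\Phi_{k,ij}$, reducing to the separable scalar situation handled in \cite{C20}.

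Second, I would invoke the scalar fractional Liouville reduction exactly as in \cite{C20}: for a scalar $\gamma\in C^{2s+\eps}$ (which we have by \ref{ass-Phi}) the identity
$$ (\nabla\cdot)^s\big(\gamma(x)\gamma(y)\,\nabla^s u\big) = \gamma\Big((-\Delta)^{s}(\gamma u) + m_\gamma\,(\gamma u)\Big) $$
holds weakly, where $m_\gamma = -\gamma^{-1}(-\Delta)^s\gamma$ is the scalar transformed potential. The one genuinely new point is that here the fractional Laplacian on the right must be replaced, entrywise, by the second-order operator $(-\Delta)^{s-1}D$ with $D = -\frac{(\Delta\,\mathrm{Id}+2s\nabla^2)\odot}{n+2s}$; this is because $\zeta_i\zeta_j$ is not the kernel of $(-\Delta)^s$ but a directional version of it, and the algebraic identity $\sum_{i,j}\xi_i\xi_j\,\widehat{(\partial_i\partial_j w)} = -\widehat{(\nabla^2\odot w)}\cdot(\text{directions})$ together with the Fourier-side computation $\zeta_i\ast\zeta_j$-type kernel yields precisely $(-\Delta)^{s-1}\partial_i\partial_j$ per entry (this is the content of property \ref{H5} and the normalization constant $n+2s$ comes from $\int_{S^{n-1}}\omega_i\omega_j\,d\omega$). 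I would carry out this Fourier-side identification carefully for a single $(i,j)$ entry, obtaining $\langle\mathbf C^s_{\gamma\gamma\,\mathrm{Id}}u,v\rangle$-type pieces in terms of $(-\Delta)^{s-1}D$; the constant $\frac{1}{n+2s}$ is fixed so that the trace (the sum over $i=j$) recovers the ordinary $(-\Delta)^s$, consistent with $(\nabla\cdot)^s\nabla^s=(-\Delta)^s$.

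Third, reassemble. Summing the per-entry, per-$k$ identities and using $\Phi_k u := (\Phi_{k,ij}u)_{ij}$, the right-hand side becomes $\sum_k \Phi_k\tridots\{(-\Delta)^{s-1}D(\Phi_k u) + (\Phi_k u)\tridots Q_k\}$ with $Q_k = -\frac{\Phi_k\otimes(-\Delta)^{s-1}D\Phi_k}{|\Phi_k|^2}$; packaging the sequence $\{\Phi_k\}$ as the extended tensor $\Phi$ and using the tensor-contraction notation $\tridots = \cdot_3$ (with the $k$-index contracted as well) gives exactly the claimed form $\mathbf C^s_A u = \Phi\tridots\{(-\Delta)^{s-1}D(\Phi u)+(\Phi u)\tridots Q\}$. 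One must check all manipulations are legitimate in the weak/distributional sense: the term $\Phi u \in H^s$ by Lemma \ref{holder-sobo-lemma} and the tensor estimates in Section \ref{subsec:sequences} (using $\Phi\in C^{2s+\eps}\subset C^{s+\eps}$), so $D(\Phi u)\in H^{s-2}$, $(-\Delta)^{s-1}D(\Phi u)\in H^{s}\subset$ the right space, and the outer contraction against $\Phi\in C^{2s+\eps}$ lands in $H^{-s}$; the interchange of $\sum_k$ with the (continuous, linear) operators is justified by the convergence of $\sum_k\|\Phi_k\|_{C^{2s+\eps}}^2$ implicit in $\Phi\in C^{2s+\eps}(\R^n)$.

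The main obstacle I expect is the Fourier-side identification in the second step: verifying that the bilinear form with kernel built from $\zeta_i(x,y)\zeta_j(x,y)$ equals the bilinear form of $\gamma\,[(-\Delta)^{s-1}\partial_i\partial_j](\gamma\,\cdot\,)$ up to the potential term, including getting the constant $\frac{1}{n+2s}$ and the exact combination $\Delta\,\mathrm{Id}+2s\nabla^2$ right. This requires either a careful computation of the symbol of the operator $w\mapsto \int (w(y)-w(x))\zeta_i(x,y)\zeta_j(x,y)\,dy$ (a second-order-symbol singular integral whose principal symbol is a multiple of $|\xi|^{2s}\big(\delta_{ij}|\xi|^2 + 2s\,\xi_i\xi_j\big)/|\xi|^2$, i.e. $|\xi|^{2s-2}(\delta_{ij}|\xi|^2+2s\xi_i\xi_j)$), or a direct kernel comparison mirroring \cite[Section 3]{C20}; the cross terms $\Phi_{k,ij}(x)-\Phi_{k,ij}(y)$ that arise from moving $\gamma$ inside are exactly what produces the potential $Q$, and keeping track of which terms are genuinely of lower order (and hence absorbed into $Q$ rather than into $(-\Delta)^{s-1}D$) is the delicate accounting. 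I would handle this by testing against Schwartz functions first, where all integrals converge absolutely, and then extend by density using the mapping properties \ref{H4}--\ref{H5} and Lemma \ref{holder-sobo-lemma}.
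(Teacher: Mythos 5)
Your overall architecture coincides with the paper's: expand $A_s$ through \ref{ass-exterior}, reduce the weak pairing $\langle \mathbf C^s_Au,v\rangle$ to per-$k$ two-point integrals with kernels $\Phi_{k,ij}(x)\Phi_{k,ij}(y)\zeta_i\zeta_j$, prove a Liouville-type identity for these, and resum using \ref{ass-Phi}, Lemma \ref{holder-sobo-lemma} and \ref{H5} (the paper carries out this last convergence step for the partial sums in $H^{-s}$, much as you indicate). The genuine gap is at the crux, your second step: you never establish the variable-coefficient identity, and the two mechanisms you propose for pinning down $D$ cannot do so. Trace consistency with $(\nabla\cdot)^s\nabla^s=(-\Delta)^s$ is blind to the coefficient $2s$: for every constant $c$ the candidate symbol $\frac{\delta_{ij}|\xi|^2+c\,\xi_i\xi_j}{n+c}$ has trace $|\xi|^2$, so the relative weight of $\Delta\,\mathrm{Id}$ and $\nabla^2$ is not determined this way; likewise $\int_{S^{n-1}}\omega_i\omega_j\,d\omega=\frac{|S^{n-1}|}{n}\delta_{ij}$ carries no $s$-dependence. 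The factor $2s$ comes from the radial weight $|z|^{-(n+2s+2)}$ and must be computed. The paper obtains it, together with the exact identity whose zeroth-order remainder is precisely $Q$, from the pointwise kernel identity
$$ \frac{(x-y)\otimes(x-y)}{|x-y|^{n+2s+2}} = \frac{\mathrm{Id}\,|x-y|^{-(n+2s)}}{n+2s} - \frac{\nabla_y\nabla_x\left(|x-y|^{-(n+2s-2)}\right)}{(n+2s)(n+2s-2)}, $$
applying the scalar reduction of \cite{C20} only to the first (isotropic, trace) piece, and handling the second piece by two integrations by parts, an $x\leftrightarrow y$ symmetrization, and the recognition of $|x-y|^{-(n+2s-2)}$ as a multiple ($C_{n,s-1}^{-1}$) of the kernel of $(-\Delta)^{s-1}$.

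Two further points of substance. First, the paper's $x\leftrightarrow y$ symmetrization explicitly uses that $\Phi_k$ (and hence $w_k=\Phi_k u$) is matrix-wise symmetric, because exchanging variables mixes the $(i,j)$ and $(j,i)$ contributions of the kernel $\partial_{y_i}\partial_{x_j}|x-y|^{-(n+2s-2)}$; a strictly per-entry reduction for fixed $i\neq j$, which is what your step two envisages, is not what the argument yields, so this symmetry input must enter your accounting somewhere. Second, your symbol heuristic $|\xi|^{2s-2}(\delta_{ij}|\xi|^2+2s\,\xi_i\xi_j)/(n+2s)$ is correct for constant coefficients, but a principal-symbol match does not produce the exact weak identity with coefficients $\Phi_{k,ij}(x)\Phi_{k,ij}(y)$ — and the exact identity is the whole content of the lemma, since the ``lower order'' bookkeeping you defer is exactly what defines $Q$ and makes the later Alessandrini identity local. (Minor slip: $(-\Delta)^{s-1}D(\Phi u)$ lands in $H^{-s}$, not $H^{s}$, since $D(\Phi u)\in H^{s-2}$ and $(-\Delta)^{s-1}$ shifts by $2-2s$.)
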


\begin{proof}Let $u,v\in H^s$, and compute
\begin{align*}
    \langle\mathbf C^s_A u ,v\rangle& = \langle A_s\cdot\nabla^s u, \nabla^sv\rangle \\ & = \langle \sum_{k\in \N} (\Phi_k(x)\odot\Phi_k(y)) \cdot \nabla^s u, \nabla^sv \rangle \\ & = \sum_{k\in \N}\langle  (\Phi_k(x)\odot\Phi_k(y))\cdot[(u(y)-u(x))\otimes \zeta],(v(y)-v(x))\otimes \zeta \rangle \\ & = \sum_{k\in\N}\langle (u(y)-u(x))(\Phi_k(x) \odot \Phi_k(y)):(\zeta\otimes\zeta) , v(y)-v(x) \rangle .
\end{align*}
Observe that the order of summation and integration can be exchanged, given that for all $k\in\N$ and almost all $x,y\in\R^n$
\begin{align*}
    |(\Phi_k(x)\odot\Phi_k(y)):(\nabla^sv\otimes\nabla^su)|& \leq |\Phi_k(x)\odot\Phi_k(y)|\,|\nabla^sv\otimes\nabla^su| \\ & \leq |\Phi_k(x)|\,|\Phi_k(y)|\,|\nabla^su|\,|\nabla^sv| \\ & \leq \|\Phi_k\|_{L^\infty(\R^n)}^2|\nabla^su|\,|\nabla^sv| \\& \leq \|\Phi_k\|_{C^{2s+\eps}(\R^n)}^2|\nabla^su|\,|\nabla^sv| \\& \leq \|\Phi\|_{C^{2s+\eps}(\R^n)}^2|\nabla^su|\,|\nabla^sv| =: g(x,y),
\end{align*} 
and $g\in L^1(\R^{2n})$. For constants $\alpha= (n+2s)^{-1}$ and $\beta = (n+2s)^{-1}(n+2s-2)^{-1}$ we have (see e.g. \cite{CdHS22}) $$ \frac{(x-y)\otimes(x-y)}{|x-y|^{n+2s+2}} = \alpha Id |x-y|^{-(n+2s)} - \beta \nabla_y\nabla_x(|x-y|^{-(n+2s-2)}), $$
and thus $$\zeta\otimes\zeta = \frac{C_{n,s}}{2}\left( \alpha Id |x-y|^{-(n+2s)} - \beta \nabla_y\nabla_x(|x-y|^{-(n+2s-2)}) \right).$$
It follows that we can write $\langle \mathbf C^s_A u,v\rangle = \sum_k\left(\alpha I_{1,k} -\frac{C_{n,s}}{2}\beta I_{2,k}\right)$, with
\begin{align*}
I_{1,k} &:=  \frac{C_{n,s}}{2}\langle Id:(\Phi_k(x) \odot \Phi_k(y))(u(y)-u(x))|x-y|^{-(n+2s)} , v(y)-v(x) \rangle,    
\end{align*}
and
$$ I_{2,k} := \langle (u(y)-u(x))(\Phi_k(x) \odot \Phi_k(y)):\nabla_y\nabla_x(|x-y|^{-(n+2s-2)}) , v(y)-v(x) \rangle. $$
For $I_{1,k}$ we compute
\begin{align*}
I_{1,k} &= \sum_i \langle \Phi_{k,i,i}(x) \Phi_{k,i,i}(y)\nabla^su , \nabla^sv \rangle \\ & = \sum_i \langle \Phi_{k,i,i}(-\Delta)^s(\Phi_{k,i,i}u) - \Phi_{k,i,i} u (-\Delta)^s\Phi_{k,i,i} , v \rangle \\ & = Id: \langle \Phi_{k}\odot(-\Delta)^s(\Phi_{k}u) - (\Phi_{k} u)\odot (-\Delta)^s\Phi_{k} , v \rangle ,    
\end{align*}
where at the second line we recognized the scalar fractional conductivity operator studied in \cite{C20} with conductivity $\Phi_{k,i,i}^2$, and thus applied the usual fractional Liouville reduction. We want to compute the second term $I_{2,k}$ integrating by parts. We let $w_k:=\Phi_ku$ and compute 
\begin{align*}
    I_{2,k} &= \int_{\R^{2n}} (\Phi_k(x)\odot w_k(y)-w_k(x)\odot\Phi_k(y))(v(y)-v(x)):\nabla_y\nabla_x(|x-y|^{-(n+2s-2)})dydx \\ & = -\int_{\R^{2n}} (\nabla_y\cdot)\left\{ (\Phi_k(x)\odot w_k(y)-w_k(x)\odot\Phi_k(y))(v(y)-v(x)) \right\} \cdot\nabla_x( |x-y|^{-(n+2s-2)}) dydx \\ & = -\int_{\R^{2n}} \left\{ (v(y)-v(x))\nabla_y\cdot [ (\Phi_k(x)\odot w_k(y)-w_k(x)\odot\Phi_k(y))] +\right. \\ & \qquad\qquad\qquad + \left.\nabla v(y)\cdot [ (\Phi_k(x)\odot w_k(y)-w_k(x)\odot\Phi_k(y))] \right\} \cdot\nabla_x( |x-y|^{-(n+2s-2)}) dydx \\ & = \int_{\R^{2n}} (\nabla_x\cdot)\left\{ (v(y)-v(x))(\nabla_y\cdot) [ (\Phi_k(x)\odot w_k(y)-w_k(x)\odot\Phi_k(y))] +\right. \\ & \qquad\qquad\qquad + \left.\nabla v(y)\cdot [ (\Phi_k(x)\odot w_k(y)-w_k(x)\odot\Phi_k(y))] \right\}  |x-y|^{-(n+2s-2)} dydx \\ & = \int_{\R^{2n}} \left\{ (v(y)-v(x))(\nabla_x\cdot)(\nabla_y\cdot) [ (\Phi_k(x)\odot w_k(y)-w_k(x)\odot\Phi_k(y))] +\right. \\ & \qquad\qquad\qquad - \nabla v(x) \cdot (\nabla_y\cdot) [ (\Phi_k(x)\odot w_k(y)-w_k(x)\odot\Phi_k(y))] + \\ & \qquad\qquad\qquad + \left.\nabla v(y)\cdot (\nabla_x\cdot)[ (\Phi_k(x)\odot w_k(y)-w_k(x)\odot\Phi_k(y))] \right\}  |x-y|^{-(n+2s-2)} dydx.
\end{align*}
Now observe that
$$ \{(\nabla_y\cdot)(A(x)\odot B(y))\}_j = \sum_i \partial_{y_i} (A_{ij}(x)B_{ij}(y)) = \sum_i A_{ij}(x)\partial_{y_i}B_{ij}(y) $$
and similarly
$$ (\nabla_x\cdot)(\nabla_y\cdot)(A(x)\odot B(y)) = \sum_j \partial_{x_j}\{(\nabla_y\cdot)(A(x)\odot B(y))\}_j = \sum_{i,j} \partial_{x_j}A_{ij}(x)\partial_{y_i}B_{ij}(y).  $$
This implies 
\begin{align*}
    I_{2,k}&= \sum_{i,j}\int_{\R^{2n}}  v(y) \{ \partial_{j}\Phi_{k,ij}(x)\partial_{i}w_{k,ij}(y) - \partial_{j}w_{k,ij}(x)\partial_{i}\Phi_{k,ij}(y) \} |x-y|^{-(n+2s-2)}dydx \\ & \quad + \sum_{i,j} \int_{\R^{2n}} \partial_j v(y)  \{ \partial_i \Phi_{k,ij}(x)w_{k,ij}(y) - \partial_i w_{k,ij}(x)\Phi_{k,ij}(y) \} |x-y|^{-(n+2s-2)}dydx \\ & \quad - \sum_{i,j}\int_{\R^{2n}}  v(x) \{ \partial_{j}\Phi_{k,ij}(x)\partial_{i}w_{k,ij}(y) - \partial_{j}w_{k,ij}(x)\partial_{i}\Phi_{k,ij}(y) \} |x-y|^{-(n+2s-2)}dydx \\ & \quad - \sum_{i,j} \int_{\R^{2n}} \partial_j v(x)  \{ \Phi_{k,ij}(x)\partial_{i}w_{k,ij}(y) - w_{k,ij}(x)\partial_{i}\Phi_{k,ij}(y)\} |x-y|^{-(n+2s-2)}dydx.
\end{align*}

Now we exchange $x$ and $y$ in the last two integrals, observing that the $n+2s-2 < n$ exponent ensures the well-definiteness of all the integrals involved. At this step we also use that $\Phi_k$ and $w_k$ are symmetric matrices for all values of $k$. We get
\begin{align*}
    \frac{I_{2,k}}{2}&= \sum_{i,j}\int_{\R^{2n}}  v(y) \{ \partial_{j}\Phi_{k,ij}(x)\partial_{i}w_{k,ij}(y) - \partial_{j}w_{k,ij}(x)\partial_{i}\Phi_{k,ij}(y) \} |x-y|^{-(n+2s-2)}dydx \\ & \quad + \sum_{i,j} \int_{\R^{2n}} \partial_j v(y)  \{ \partial_i \Phi_{k,ij}(x)w_{k,ij}(y) - \partial_i w_{k,ij}(x)\Phi_{k,ij}(y) \} |x-y|^{-(n+2s-2)}dydx.
\end{align*}
Let us rewrite the last formula equivalently as
\begin{align*}
    \frac{C_{n,s-1}I_{2,k}}{2}&= \sum_{i,j}\int_{\R^{n}}  v  \{ R\partial_{j}\Phi_{k,ij}\, \partial_{i}w_{k,ij} - R\partial_{j}w_{k,ij}\,\partial_{i}\Phi_{k,ij} \} dy \\ & \quad + \sum_{i,j} \int_{\R^{n}} \partial_i v  \{ R\partial_j \Phi_{k,ij}\,w_{k,ij} - R\partial_j w_{k,ij}\,\Phi_{k,ij} \} dy,
\end{align*}
where for the sake of readability we let $R:=(-\Delta)^{s-1}$. Since $R$ commutes with the derivatives, one last integration by parts in the last integral gives
\begin{align*}
    \int_{\R^{n}} \partial_i v  \{ R\partial_j \Phi_{k,ij}\,w_{k,ij} - R\partial_j w_{k,ij}\,\Phi_{k,ij} \} dy & = - \int_{\R^{n}} v  \partial_i\{ R\partial_j \Phi_{k,ij}\,w_{k,ij} - R\partial_j w_{k,ij}\,\Phi_{k,ij} \} dy \\ & =
    -\int_{\R^n} v\{ w_{k,ij} \, R\partial_i\partial_j \Phi_{k,ij} - \Phi_{k,ij}\,R\partial_i\partial_j w_{k,ij}\}dy \\ & \quad - \int_{\R^n} v\{ \partial_iw_{k,ij} \, R\partial_j \Phi_{k,ij} - \partial_i\Phi_{k,ij}\,R\partial_j w_{k,ij}\}dy,
\end{align*}
and so eventually
\begin{align*}
    \frac{C_{n,s-1}I_{2,k}}{2}&= \sum_{i,j}\int_{\R^n} v\{ \Phi_{k,ij}\,R\partial_i\partial_j w_{k,ij}-w_{k,ij} \, R\partial_i\partial_j \Phi_{k,ij}\}dy \\ & = \langle \Phi_k: (-\Delta)^{s-1}(\nabla^2\odot w_k) - w_k : (-\Delta)^{s-1}(\nabla^2\odot \Phi_k) , v\rangle.
\end{align*}
Putting together the computations for $I_{1,k}$ and $I_{2,k}$ we get the following equality

\begin{align*}
    \mathbf \langle \mathbf C^s_A u,v\rangle & = \sum_{k\in\N}\left(\alpha I_{1,k} -\frac{C_{n,s}}{2}\beta I_{2,k}\right) \\ & = \alpha\sum_{k\in\N} Id: \langle \Phi_{k}\odot(-\Delta)^sw_k - w_k\odot (-\Delta)^s\Phi_{k} , v \rangle \\ &  \quad +\frac{C_{n,s} \beta}{C_{n,s-1}}\sum_{k\in\N} \langle w_k : (-\Delta)^{s-1}(\nabla^2\odot \Phi_k)-\Phi_k: (-\Delta)^{s-1}(\nabla^2\odot w_k), v  \rangle \\ & = \sum_{k\in\N}  \langle \Phi_k : (-\Delta)^{s-1}D w_k -  w_k : (-\Delta)^{s-1}D \Phi_k,v\rangle \\ & = \lim_{K\rightarrow \infty} \langle \sum_{k=1}^K(\Phi_k : (-\Delta)^{s-1}D w_k -  w_k : (-\Delta)^{s-1}D \Phi_k),v\rangle =: \lim_{K\rightarrow \infty} F_K(v) ,
\end{align*}
where the differential operator with constant coefficients $D$ is $$D:= -\left(\alpha\Delta Id + \beta\frac{C_{n,s}}{C_{n,s-1}}\nabla^2\right)\odot =- \frac{\left(\Delta Id + 2s\nabla^2\right)\odot}{n+2s}.$$
By Lemma \ref{holder-sobo-lemma} and property \ref{H1}, $\Phi_k$ is a multiplier on $H^s(\R^n)$ (and thus also in $H^{-s}(\R^n)$, see \cite{MS09, CMRU22}) for every $k\in\N$. Moreover, $(-\Delta)^{s-1}D\Phi_k\in  C^{\eps}(\R^n)$ by property \ref{H5}. This ensures that the operators $F_K$ belong to $H^{-s}(\R^n)$ for all $K\in\N$. They converge to $F:= \Phi \tridots (-\Delta)^{s-1}D(\Phi u) - \Phi u\tridots (-\Delta)^{s-1}D\Phi $ in the norm of $H^{-s}$ as $K\rightarrow \infty$, since
\begin{align*}
    \|F_K-F\|_{H^{-s}} & \leq \|\sum_{k=1}^K(\Phi_k : (-\Delta)^{s-1}D (\Phi_ku))-\Phi \tridots (-\Delta)^{s-1}D(\Phi u)\|_{H^{-s}} \\ & \quad + \|\sum_{k=1}^K(\Phi_ku : (-\Delta)^{s-1}D \Phi_k)-\Phi u\tridots (-\Delta)^{s-1}D\Phi\|_{H^{-s}} \\ & = \|\sum_{k>K}\Phi_k : (-\Delta)^{s-1}D (\Phi_ku)\|_{H^{-s}} + \|\sum_{k>K}\Phi_ku : (-\Delta)^{s-1}D \Phi_k\|_{H^{-s}} \\ & \leq \sum_{k>K}\left(\|\Phi_k : (-\Delta)^{s-1}D (\Phi_ku)\|_{H^{-s}} + \|\Phi_ku : (-\Delta)^{s-1}D \Phi_k\|_{H^{-s}}\right) \\ & \lesssim \sum_{k>K}\left(\|\Phi_k\|_{C^{2s+\eps}}\| (-\Delta)^{s-1}D (\Phi_ku)\|_{H^{-s}} + \|\Phi_ku\|_{H^s} \|(-\Delta)^{s-1}D \Phi_k\|_{C^{\eps}}\right) \\ & \lesssim \sum_{k>K}\|\Phi_k\|_{C^{2s+\eps}}\| \Phi_ku\|_{H^{s}} \\ & \lesssim \|u\|_{H^{s}}\sum_{k>K}\|\Phi_k\|^2_{C^{2s+\eps}} \leq \|u\|_{H^{s}}\left(\sum_{k>K}\|\Phi_k\|_{C^{2s+\eps}}\right)^2 ,
\end{align*}
which converges to $0$ as $K\rightarrow\infty$ by assumption \ref{ass-Phi}. Given that 
$$ |F_K(v)-F(v)|\leq \|v\|_{H^s}\sup_{\|\xi\|_{H^s}=1}|F_K(\xi)-F(\xi)| = \|v\|_{H^s}\|F_K-F\|_{H^{-s}}\rightarrow 0,$$
we deduce that $\mathbf C^s_A u= \Phi \tridots (-\Delta)^{s-1}D(\Phi u) - \Phi u\tridots (-\Delta)^{s-1}D\Phi$ holds in weak sense. Equivalently, if the transformed potential $Q$ is defined as $Q:= -\frac{\Phi\otimes (-\Delta)^{s-1}D\Phi}{|\Phi|^2}$, one can write
\begin{align*}
    \mathbf C^s_A u & = \Phi \tridots \left\{(-\Delta)^{s-1}D(\Phi u) - u (-\Delta)^{s-1}D\Phi\right\} \\ & = \Phi \tridots \left\{(-\Delta)^{s-1}D(\Phi u) - \frac{(\Phi u)\tridots \Phi}{|\Phi|^2} (-\Delta)^{s-1}D\Phi\right\} \\ & = \Phi \tridots \left\{(-\Delta)^{s-1}D(\Phi u) + (\Phi u)\tridots  Q\right\}.
\end{align*}
Observe that all the terms on the right hand side make sense in $H^{-s}(\R^n)$. In fact, since $\Phi\in C^{2s+\eps}(\R^n)$ and $|\Phi|>0$ by assumptions \ref{ass-Phi} and \ref{ass-positive}, following the reasoning of Section \ref{subsec:sequences} we have that $\Phi/|\Phi|^2 \in C^{2s+\eps}(\R^n)$. As observed, $(-\Delta)^{s-1}D\Phi\in  C^{\eps}(\R^n)$ by property \ref{H5}. Since by \ref{H1} we have $ C^{2s+\eps}(\R^n)\subset  C^{\eps}(\R^n)$, a new application of \ref{H3} ensures $Q\in C^\eps(\R^n)\subset L^\infty(\R^n)$. Given that $\Phi$ is a multiplier on $H^s(\R^n)$ and $H^{-s}(\R^n)$ by Lemma \ref{holder-sobo-lemma} and property \ref{H1}, we get $(-\Delta)^{s-1}D(\Phi u) + (\Phi u)\tridots Q \in H^{-s}(\R^n)$, and eventually $\Phi \tridots \left\{(-\Delta)^{s-1}D(\Phi u) + (\Phi u)\tridots  Q\right\}\in H^{-s}(\R^n)$. \end{proof}

\subsection{Weakly anisotropic and isotropic matrices} \label{simple-cases}
In this section we look at $\mathbf C^s_A$ for matrices $A$ of special forms. In some of these cases, the structure of the operator simplifies noticeably, and we recover familiar operators related to the fractional Laplacian $(-\Delta)^s$. \vspace{2mm}

Since $A_s(x,y)$ is positive definite and symmetric for all $x,y\in\R^n$, there exist a diagonal matrix $L(x,y)$ with positive entries and an orthonormal matrix $U(x,y)$  such that $$A_s(x,y) = U^T(x,y)\,L(x,y)\,U(x,y)$$
for all $x,y\in\R^n$. The columns of $U$ (that is, the eigenvectors of $A_s$ of unit norm) represent the principal directions of $A_s$ at $(x,y)$, while the entries of $L$ (i.e. the eigenvalues of $A_s$) are the corresponding scalar conductivities in the given directions. The anisotropy of $A_s$ is due to two factors: the principal directions of $A_s$ change according to position, and the $n$ directional conductivities differ among themselves even at any given point. As a result, in general the electric field and the current density are not parallel to each other. \vspace{2mm}

However, there exist materials showing a weaker form of anisotropy, in the sense that the principal directions do not change with respect to position. This is the case of many crystalline materials (such as graphite, high temperature superconductors, and some metals), which are characterized by atomic structures repeating periodically in fixed directions. In this case, the matrices of the family $\{A_s(x,y): x,y\in\R^n\}$ are \emph{simultaneously diagonalizable}, which means that there exists a unique orthonormal matrix $U$ such that the formula
$$ A_s(x,y)= U^T\, L(x,y)\, U $$
holds for all $x,y\in\R^n$. By letting $x':= U\cdot x$, $u':= u\circ U^T $ and $L':= L\circ \left(  \begin{array}{cc}
U^T & 0  \\
0 & U^T  \end{array} \right)$ for all $x\in\R^n$, we see that 
$$ U\cdot\nabla^sv(x,y) =  \frac{C_{n,s}^{1/2}}{\sqrt{2}} \frac{v(y)-v(x)}{|x-y|^{n/2+s+1}}U\cdot(x-y) = \frac{C_{n,s}^{1/2}}{\sqrt{2}} \frac{v'(y')-v'(x')}{|x'-y'|^{n/2+s+1}}(x'-y') = \nabla^sv'(x',y'), $$
and thus the fractional conductivity operator becomes
\begin{align*}
    \langle \mathbf C^s_A u,v\rangle & = \langle A_s(x,y)\cdot\nabla^s u, \nabla^s v \rangle \\ & = \langle L(x,y)\cdot (U\cdot\nabla^su),U\cdot\nabla^sv\rangle \\ & = \langle L'(x',y')\cdot \nabla^su',\nabla^sv' \rangle \\ & = \langle \mathbf C^s_{L'} u', v'\rangle 
\end{align*}

Thus we see that it is interesting to consider $\mathbf C^s_A$ for a diagonal matrix $A$. We shall also assume that the exterior value matrices $\beta_k$ are diagonal for all $k\in\N$, so that the matrices $\phi_k$ can themselves be taken to be diagonal for all $k\in\N$, and $\Phi$ is a sequence of diagonal matrices. Given that the usual matrix product and the Hadamard product coincide for diagonal matrices, in this case we can write
$$ A_s(x,y) = \sum_{k\in\N} \Phi_k(x) \cdot \Phi_k(y).$$

Assume now that the operator is isotropic, that is $A_s(x,y) = \sigma(x,y)$Id for a symmetric scalar function $\sigma$, and $\beta_k = b_k$Id for all $k\in\N$. In this case, the functions $\phi_k^{(i,j)}$ can be taken to vanish whenever $i\neq j$, and there exists a function $\phi_k$ such that $\phi_k^{(i,i)}=\phi_k$ for all $i\in\{1,...,n\}$. In other words, $\Phi_k = \phi_k$Id, and so we have
\begin{align*}
Id : D(u\Phi_k) &= -Id : \frac{\left(\Delta Id + 2s\nabla^2\right)}{n+2s}\odot (u\phi_k Id) = -\Delta (u\phi_k ) .
\end{align*}
This implies that
\begin{align*}
    \mathbf C^s_A u &  = \sum_k  \phi_k Id : (-\Delta)^{s-1}D (u\Phi_k) - u \phi_k Id : (-\Delta)^{s-1}D \Phi_k \\ & = \sum_k  \phi_k \left((-\Delta)^s (u\phi_k ) - u (-\Delta)^s \phi_k\right) \\ &  = \phi \cdot ((-\Delta)^s(u\phi) + (u\phi)\cdot Q),
\end{align*}
where $Q:= -\frac{\phi\otimes (-\Delta)^s\phi}{|\phi|^2}$. We see that in this case the equation retains its form, but it is greatly simplified. If moreover $\sigma$ is a separable function of $x,y$, then $\mathbf C^s_A$ is the usual fractional conductivity operator from \cite{C20}, and Lemma \ref{reduction-lemma} gives the relative fractional Liouville reduction.

\section{Well-posedness of the direct problems and reduction}\label{sec-wellp}

We consider the direct problem
\begin{equation}\label{direct-problem}
    \begin{split}
        \mathbf C^s_{A} u = F & \quad\mbox{ in } \Omega, \\
u = f & \quad\mbox{ in } \Omega_e,
    \end{split}
\end{equation}
for $F \in H^{-s}(\Omega)$ and $f\in H^s(\R^n)$. Define the bilinear form
$$ B^s_A(u,v) := \langle A(x,y)\cdot\nabla^s u, \nabla^s v \rangle = \langle A_s(x,y)\cdot\nabla^s u, \nabla^s v \rangle $$
for $u,v \in C^\infty_c(\R^n)$. The boundedness estimate
\begin{equation}\label{boundedness}
    |B^s_A(u,v)| \leq  \|A\cdot\nabla^s u\|_{L^2(\R^{2n})} \|\nabla^s v\|_{L^2(\R^{2n})} \lesssim \|u\|_{H^s}\|v\|_{H^s}
\end{equation}
holds since $A\in L^\infty$. Thus $B^s_A$ can be extended by density to act as a bounded linear operator on $H^s(\R^n)\times H^s(\R^n)$.
We also define the potential energy $U^s_A$ for all $u\in H^s(\R^n)$ as $$U^s_A(u) := B^s_A(u,u).$$
Since by assumption \ref{ass-positive} $A_s$ is uniformly positive definite, it holds that
$$U^s_A(u) = \langle A_s\cdot\nabla^su,\nabla^su\rangle \geq \nu\|\nabla^su\|_{L^2(\R^{2n})}^2\approx \|(-\Delta)^{s/2}u\|_{L^2(\R^n)}^2,$$
so that if $u\in H^s_K(\R^n)$ for some compact set $K$, by the fractional Poincar\'e inequality (see e.g. \cite{GSU20} and \cite{CdHS22}) we obtain the coercivity estimate
\begin{equation}\label{coercivity}
    B^s_A(u,u) = U^s_A(u) \gtrsim \|u\|_{H^s(\R^n)}^2.
\end{equation}

We say that $u\in H^s(\mathbb R^n)$ is a weak solution to the inhomogeneous problem \eqref{direct-problem} if and only if $B^s_A(u,v)=F(v)$ holds for all $v\in\widetilde H^s(\Omega)$, and $u-f\in\widetilde H^s(\Omega)$. The next Proposition gives the well-posedness for the direct problem: 

\begin{proposition}
Let $s \in (0,1)$ and assume $\Omega \subset \mathbb R^n$ is a bounded open set. For any $f\in H^s(\mathbb R^n)$ and $F\in H^{-s}(\Omega)$ there exists a unique $u\in H^s(\mathbb R^n)$ such that $u-f \in \widetilde H^s(\Omega)$ and
$$ B^s_A(u,v) = F(v) \quad \mbox{for all} \quad v \in \widetilde H^s(\Omega).$$
Moreover, the following estimate holds: 
$$ \|u\|_{H^s(\mathbb R^n)} \leq C\left( \|f\|_{H^s(\mathbb R^n)} + \|F\|_{H^{-s}(\Omega)} \right). $$
\end{proposition}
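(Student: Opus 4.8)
The plan is to reduce \eqref{direct-problem} to a homogeneous problem and apply the Lax–Milgram theorem. First I would write $u = f + v$ with the new unknown $v$, so that the exterior condition $u - f \in \widetilde H^s(\Omega)$ is equivalent to $v \in \widetilde H^s(\Omega)$, and the weak formulation $B^s_A(u,w) = F(w)$ for all $w \in \widetilde H^s(\Omega)$ becomes
\[
B^s_A(v, w) = F(w) - B^s_A(f, w) =: \ell(w) \qquad \text{for all } w \in \widetilde H^s(\Omega).
\]
By the boundedness estimate \eqref{boundedness} and the duality $(\widetilde H^s(\Omega))^* = H^{-s}(\Omega)$, the right-hand side $\ell$ is a bounded linear functional on $\widetilde H^s(\Omega)$ with $\|\ell\|_{(\widetilde H^s(\Omega))^*} \lesssim \|F\|_{H^{-s}(\Omega)} + \|f\|_{H^s(\R^n)}$.

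Next I would check the hypotheses of (the non-symmetric) Lax–Milgram for $B^s_A$ restricted to the Hilbert space $\widetilde H^s(\Omega)$. Boundedness is again \eqref{boundedness}. Coercivity is exactly \eqref{coercivity}: since $\Omega$ is bounded, every $v \in \widetilde H^s(\Omega) \subseteq H^s_{\overline\Omega}(\R^n)$ is supported in the compact set $\overline\Omega$, so the fractional Poincaré inequality applies and, together with assumption \ref{ass-positive}, yields $B^s_A(v,v) = U^s_A(v) \gtrsim \|v\|_{H^s(\R^n)}^2$. Note that symmetry of $B^s_A$ is not needed here. Lax–Milgram then produces a unique $v \in \widetilde H^s(\Omega)$ with $B^s_A(v,w) = \ell(w)$ for all $w \in \widetilde H^s(\Omega)$, and the quantitative bound $\|v\|_{H^s(\R^n)} \lesssim \|\ell\|_{(\widetilde H^s(\Omega))^*} \lesssim \|F\|_{H^{-s}(\Omega)} + \|f\|_{H^s(\R^n)}$.

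Finally I would set $u := f + v$: by construction $u - f = v \in \widetilde H^s(\Omega)$ and $B^s_A(u,w) = F(w)$ for all $w \in \widetilde H^s(\Omega)$, so $u$ is a weak solution. For uniqueness, if $u_1, u_2$ are weak solutions then $u_1 - u_2 \in \widetilde H^s(\Omega)$ and $B^s_A(u_1-u_2, w) = 0$ for all $w \in \widetilde H^s(\Omega)$; choosing $w = u_1 - u_2$ and invoking coercivity forces $u_1 = u_2$. The claimed estimate follows from the triangle inequality $\|u\|_{H^s(\R^n)} \le \|f\|_{H^s(\R^n)} + \|v\|_{H^s(\R^n)}$ combined with the bound on $\|v\|_{H^s(\R^n)}$.

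There is no real obstacle here, since the entire argument rests on the a priori estimates \eqref{boundedness} and \eqref{coercivity}, which have already been established. The only points meriting a line of care are that coercivity holds on $\widetilde H^s(\Omega)$ — not on all of $H^s(\R^n)$ — because it uses the compact support of the test functions through the fractional Poincaré inequality, and that the affine reduction $u = f + v$ is compatible with the exterior condition; both are routine.
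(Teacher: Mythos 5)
Your argument is correct and is exactly the route the paper intends: the paper's own proof simply invokes the Lax--Milgram theorem together with the boundedness and coercivity estimates \eqref{boundedness}, \eqref{coercivity} and omits the details, while you carry out the standard reduction $u=f+v$ with $v\in\widetilde H^s(\Omega)$ and the duality $(\widetilde H^s(\Omega))^*=H^{-s}(\Omega)$ to obtain existence, uniqueness and the quantitative bound. Your remarks that symmetry of $B^s_A$ is not needed and that coercivity holds on $\widetilde H^s(\Omega)$ via the fractional Poincar\'e inequality (using the compact support in $\overline\Omega$) are precisely the points the paper's cited references rely on, so nothing is missing.
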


\begin{proof}
The proof of this statement follows by the Lax-Milgram theorem, given the boundedness and coercivity estimates \eqref{boundedness}, \eqref{coercivity}. It goes along the same lines as the corresponding proposition in \cite{CdHS22}, see also \cite{GSU20}, and it is by now well understood. We thus omit it here. 
\end{proof}

Next, we study the well-posedness of the transformed direct problem
\begin{equation}\label{transformed-problem}
    \begin{split}
        \mathbf (-\Delta)^{s-1}D w + w\tridots Q = G & \quad\mbox{ in } \Omega, \\
w = \Phi f & \quad\mbox{ in } \Omega_e,
    \end{split}
\end{equation}
for $G \in (\Phi \widetilde H^{s}(\Omega))^*$ and $f\in H^s(\R^n)$, where $Q:= -\frac{\Phi\otimes (-\Delta)^{s-1}D\Phi}{|\Phi|^2}$. As for the original direct problem, we define the bilinear form $$B^s_{Q}(w,v) := \langle (-\Delta)^{s-1}Dw, v \rangle + \langle w\tridots Q, v \rangle $$ for $w,v\in \Phi H^s(\R^n)$. We say that $w\in \Phi H^s(\mathbb R^n)$ is a weak solution to the inhomogeneous problem \eqref{transformed-problem} if and only if $B^s_Q(w,v)=G(v)$ holds for all $v\in\Phi \widetilde H^s(\Omega)$, and $w-\Phi f\in \Phi \widetilde H^s(\Omega)$. By virtue of the reduction Lemma \ref{reduction-lemma}, we can compute 
\begin{equation}
    \begin{split}
        \label{equivalence-bilinear-forms}
    B^s_{Q}(\Phi u, \Phi v) & = \langle (-\Delta)^{s-1}D(\Phi u), \Phi v \rangle + \langle (\Phi u)\tridots Q, \Phi v \rangle \\ & = \langle \Phi\tridots \{(-\Delta)^{s-1}D(\Phi u) + (\Phi u)\tridots Q\}, v\rangle \\ & = \langle \mathbf C^s_A u, v\rangle = B^s_A(u,v)
    \end{split}
\end{equation}
for all $u,v\in H^s(\R^n)$, and thus $B^s_Q$ is immediately bounded and coercive in the space $\Phi H^s(\R^n) \times \Phi H^s(\R^n)$ by formulas \eqref{boundedness}, \eqref{coercivity}, and Lemma \ref{holder-sobo-lemma}. Therefore, we obtain the following well-posedness result:
\begin{proposition}
Let $s \in (0,1)$ and assume $\Omega \subset \mathbb R^n$ is a bounded open set. For any $f\in H^s(\mathbb R^n)$ and $G\in (\Phi \widetilde H^{s}(\Omega))^*$ there exists a unique $w\in\Phi H^s(\mathbb R^n)$ such that $w-\Phi f \in \Phi\widetilde H^s(\Omega)$ and
$$ B^s_{Q}(w,v) = G(v) \quad \mbox{for all} \quad v \in \Phi\widetilde H^s(\Omega).$$
\end{proposition}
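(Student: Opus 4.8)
The plan is to derive this well-posedness result from the preceding Proposition on the original problem \eqref{direct-problem}, using the reduction Lemma \ref{reduction-lemma} together with the fact that multiplication by the matrix sequence $\Phi$ is a Hilbert space isomorphism. Concretely, recall from Section \ref{subsec:sequences} that $\Phi H^s(\R^n)$ is a Hilbert subspace of the relevant sequence space, and likewise $\Phi\widetilde H^s(\Omega)$ is a closed subspace; I would first check that the linear map $J\colon u\mapsto\Phi u$ is a bounded bijection from $H^s(\R^n)$ onto $\Phi H^s(\R^n)$, restricting to a bounded bijection $\widetilde H^s(\Omega)\to\Phi\widetilde H^s(\Omega)$, with bounded inverse. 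Boundedness of $J$ follows from Lemma \ref{holder-sobo-lemma}, property \ref{H1} and assumption \ref{ass-Phi}; the inverse is $J^{-1}w=\frac{\Phi}{|\Phi|^2}\tridots w$, which is well defined and bounded because $|\Phi|>0$ by assumption \ref{ass-positive} and $\frac{\Phi}{|\Phi|^2}\in C^{2s+\eps}(\R^n)$ by \ref{H2}, \ref{H3}, exactly as in the proof of Lemma \ref{reduction-lemma}.

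Granting this, the argument is short. Given $f\in H^s(\R^n)$ and $G\in(\Phi\widetilde H^s(\Omega))^*$, I would set $F\in(\widetilde H^s(\Omega))^*=H^{-s}(\Omega)$, $F(v):=G(\Phi v)$, which is bounded since $J$ is. The preceding Proposition then provides a unique $u\in H^s(\R^n)$ with $u-f\in\widetilde H^s(\Omega)$ and $B^s_A(u,v)=F(v)$ for all $v\in\widetilde H^s(\Omega)$. Putting $w:=\Phi u$ one has $w-\Phi f=\Phi(u-f)\in\Phi\widetilde H^s(\Omega)$, and by \eqref{equivalence-bilinear-forms}, $B^s_Q(w,\Phi v)=B^s_A(u,v)=F(v)=G(\Phi v)$ for every $v\in\widetilde H^s(\Omega)$; since $J$ maps $\widetilde H^s(\Omega)$ onto $\Phi\widetilde H^s(\Omega)$, this is exactly $B^s_Q(w,\tilde v)=G(\tilde v)$ for all $\tilde v\in\Phi\widetilde H^s(\Omega)$, so $w$ is a weak solution. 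For uniqueness, if $w$ is any weak solution I would write $u:=\frac{\Phi}{|\Phi|^2}\tridots w$, observe that $w-\Phi f\in\Phi\widetilde H^s(\Omega)$ forces $u-f\in\widetilde H^s(\Omega)$, and use \eqref{equivalence-bilinear-forms} together with surjectivity of $J$ to obtain $B^s_A(u,v)=F(v)$ for all $v\in\widetilde H^s(\Omega)$; the uniqueness part of the preceding Proposition pins down $u$, hence $w=\Phi u$ is unique.

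I expect the only genuine obstacle to be the first step: that $J$ is invertible with \emph{bounded} inverse, equivalently that the subspace norm on $\Phi H^s(\R^n)$ controls $\|u\|_{H^s(\R^n)}$ from above. This is where the uniform positivity \ref{ass-positive} is essential, since it guarantees $|\Phi|\geq c>0$ pointwise so that the smooth function $x\mapsto x/|x|^2$ may be composed with $\Phi$ without singularity; combined with the tensor-multiplier estimates of Section \ref{subsec:sequences} and \ref{ass-Phi}, this gives the two-sided norm equivalence. Everything else is bookkeeping. As an alternative that bypasses the preceding Proposition, I could apply the Lax--Milgram theorem directly on the Hilbert space $\Phi\widetilde H^s(\Omega)$: by \eqref{boundedness}, \eqref{coercivity} and \eqref{equivalence-bilinear-forms} the form $B^s_Q$ is bounded and coercive there, and after moving $B^s_Q(\Phi f,\cdot)$ to the right-hand side the solution $w-\Phi f\in\Phi\widetilde H^s(\Omega)$ is produced by Lax--Milgram; this route still needs the same mapping properties of $\Phi$ to identify the underlying function space.
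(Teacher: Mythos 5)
Your proposal is correct and in substance the same as the paper's argument: the paper simply observes that, by \eqref{equivalence-bilinear-forms} together with \eqref{boundedness}, \eqref{coercivity} and Lemma \ref{holder-sobo-lemma}, the form $B^s_Q$ is bounded and coercive on $\Phi H^s(\R^n)\times\Phi H^s(\R^n)$ and then runs the Lax--Milgram scheme on the transformed spaces, which is exactly your alternative route. Your primary route, transferring the already established well-posedness of \eqref{direct-problem} through the bounded isomorphism $u\mapsto\Phi u$ with inverse $w\mapsto \frac{\Phi\tridots w}{|\Phi|^2}$, relies on the same ingredients (the norm equivalence of Section \ref{subsec:sequences}, assumptions \ref{ass-positive} and \ref{ass-Phi}, and \eqref{equivalence-bilinear-forms}) and is an equivalent repackaging rather than a genuinely different proof.
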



The next proposition follows immediately from Lemma \ref{reduction-lemma}. 

\begin{proposition}[Fractional Liouville reduction]
Let $s \in (0,1)$ and assume $\Omega \subset \mathbb R^n$ is a bounded open set. If $u\in H^s(\mathbb R^n)$ solves the original problem
\begin{equation}
    \begin{array}{rll}\label{original2}
        \mathbf C^s_A u & =F & \quad \mbox{ in } \Omega \\
       u & = f & \quad \mbox{ in } \Omega_e
    \end{array}
\end{equation}
in weak sense in $\widetilde H^s(\Omega)$ for some $f\in H^s(\mathbb R^n)$ and $F\in H^{-s}(\Omega)$, then $w:=\Phi u$ solves the transformed problem
\begin{equation}
    \begin{array}{rll}\label{easier-one}
       \mathbf (-\Delta)^{s-1}D w + w\tridots Q & = G & \quad\mbox{ in } \Omega \\
       w & = \Phi f & \quad\mbox{ in } \Omega_e
    \end{array}
\end{equation}
in weak sense in $\Phi \widetilde H^s(\Omega)$, where $G:= \frac{\Phi F}{|\Phi|^2}$. Conversely, if $w\in \Phi H^s(\mathbb R^n)$ solves \eqref{easier-one} in weak sense in $\Phi\widetilde H^s(\Omega)$ for some $G\in (\Phi \widetilde H^{s}(\Omega))^*$, then $u:=\frac{\Phi\tridots w}{|\Phi|^2}$ solves \eqref{original2} in weak sense in $\widetilde H^s(\Omega)$, where $F=\Phi\tridots G$.
\end{proposition}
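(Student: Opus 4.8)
The plan is to treat this proposition as an essentially immediate consequence of the reduction Lemma \ref{reduction-lemma}, or more precisely of the identity \eqref{equivalence-bilinear-forms} between the bilinear forms $B^s_A$ and $B^s_Q$; the only genuine work is the bookkeeping with the function spaces and with the two right-hand sides. First I would record the elementary fact that, since $|\Phi|>0$ by \ref{ass-positive} and $\Phi,\ \Phi/|\Phi|^2\in C^{2s+\eps}(\R^n)$ by \ref{ass-Phi} together with \ref{H2} and Lemma \ref{holder-sobo-lemma}, the map $v\mapsto \Phi v$ is a linear homeomorphism of $H^s(\R^n)$ onto $\Phi H^s(\R^n)$ and of $\widetilde H^s(\Omega)$ onto $\Phi\widetilde H^s(\Omega)$, with inverse $w\mapsto \frac{\Phi\tridots w}{|\Phi|^2}$ (here one uses the contraction identity $\Phi\tridots\Phi=|\Phi|^2$). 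In particular $u-f\in\widetilde H^s(\Omega)$ if and only if $\Phi u-\Phi f\in\Phi\widetilde H^s(\Omega)$, which disposes of the exterior conditions in both directions as soon as we set $w=\Phi u$.

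For the forward implication, assume $u\in H^s(\R^n)$ solves \eqref{original2} weakly in $\widetilde H^s(\Omega)$ and put $w:=\Phi u$. Given any test element of $\Phi\widetilde H^s(\Omega)$, write it as $\Phi v$ with $v\in\widetilde H^s(\Omega)$; then by \eqref{equivalence-bilinear-forms},
\[ B^s_Q(w,\Phi v)=B^s_Q(\Phi u,\Phi v)=B^s_A(u,v)=F(v). \]
Since $G:=\frac{\Phi F}{|\Phi|^2}$ acts on $\Phi\widetilde H^s(\Omega)$ by $\langle G,\Phi v\rangle=\langle F,\tfrac{\Phi\tridots\Phi}{|\Phi|^2}v\rangle=F(v)$, this says precisely $B^s_Q(w,\Phi v)=G(\Phi v)$, so $w$ solves \eqref{easier-one} weakly in $\Phi\widetilde H^s(\Omega)$. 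One should also check that $G\in(\Phi\widetilde H^s(\Omega))^*$, which is immediate from $\Phi/|\Phi|^2\in C^{2s+\eps}(\R^n)$ being a multiplier on $H^{-s}(\R^n)$ via Lemma \ref{holder-sobo-lemma} and \ref{H1}.

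For the converse, suppose $w\in\Phi H^s(\R^n)$ solves \eqref{easier-one} weakly in $\Phi\widetilde H^s(\Omega)$, and set $u:=\frac{\Phi\tridots w}{|\Phi|^2}$, so that $u\in H^s(\R^n)$ and $w=\Phi u$ by the homeomorphism above; as noted, $w-\Phi f\in\Phi\widetilde H^s(\Omega)$ then gives $u-f\in\widetilde H^s(\Omega)$. For any $v\in\widetilde H^s(\Omega)$ we have $\Phi v\in\Phi\widetilde H^s(\Omega)$, hence again by \eqref{equivalence-bilinear-forms}
\[ B^s_A(u,v)=B^s_Q(\Phi u,\Phi v)=B^s_Q(w,\Phi v)=G(\Phi v)=\langle\Phi\tridots G,v\rangle=F(v), \]
where $F:=\Phi\tridots G$ is defined as the functional $v\mapsto\langle G,\Phi v\rangle$ on $\widetilde H^s(\Omega)$ and lies in $H^{-s}(\Omega)$ because $\Phi$ is a multiplier on $H^s(\R^n)$ and $G\in(\Phi\widetilde H^s(\Omega))^*$. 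Thus $u$ solves \eqref{original2} weakly in $\widetilde H^s(\Omega)$.

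The main obstacle here is not analytic but organizational: one must keep straight that the two weak formulations are posed over the different test spaces $\widetilde H^s(\Omega)$ and $\Phi\widetilde H^s(\Omega)$, verify that $v\mapsto\Phi v$ is genuinely a bijection between them (this is exactly where $|\Phi|>0$ is used), and apply the contraction identities $\Phi\tridots\Phi=|\Phi|^2$ and $\langle G,\Phi v\rangle=\langle\Phi\tridots G,v\rangle$ consistently so that the data transformations $G=\Phi F/|\Phi|^2$ and $F=\Phi\tridots G$ are mutually inverse. Everything else reduces to \eqref{equivalence-bilinear-forms} and the multiplier properties already established in Lemma \ref{holder-sobo-lemma} and Lemma \ref{reduction-lemma}.
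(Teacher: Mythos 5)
Your proposal is correct and follows essentially the same route as the paper: both reduce the statement to the bilinear-form identity \eqref{equivalence-bilinear-forms}, transfer the exterior conditions via the correspondence $w=\Phi u$, $u=\frac{\Phi\tridots w}{|\Phi|^2}$, and check that the data transformations $G=\frac{\Phi F}{|\Phi|^2}$ and $F=\Phi\tridots G$ reproduce the right-hand sides on the respective test spaces. Your explicit verification that $v\mapsto\Phi v$ is a bijection $\widetilde H^s(\Omega)\to\Phi\widetilde H^s(\Omega)$ and that $\Phi\tridots\Phi=|\Phi|^2$ only spells out details the paper leaves implicit.
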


\begin{proof}
Observe that $u\in H^s(\R^n)$ solves \eqref{original2} in weak sense if and only if $u-f\in \widetilde H^s(\Omega)$, and also for all $v\in \widetilde H^s(\Omega)$ we have $B^s_A(u,v)=F(v)$. Then of course $w:=\Phi u$ belongs to $\Phi H^s(\R^n)$, it verifies $w-\Phi f \in \Phi \widetilde H^s(\Omega)$, and also by \eqref{equivalence-bilinear-forms}
$$ B^s_Q(w,\Phi v) = B^s_A(u,v) = \langle F,v\rangle = \langle G, \Phi v\rangle $$
for $G:= \frac{\Phi F}{|\Phi|^2} \in (\Phi \widetilde H^{s}(\Omega))^*$. Conversely, if $w\in \Phi H^s(\mathbb R^n)$ solves \eqref{easier-one} in weak sense in $\Phi\widetilde H^s(\Omega)$ for some $G\in (\Phi \widetilde H^{s}(\Omega))^*$, then $w-\Phi f \in \Phi \widetilde H^s(\Omega)$, and also $ B^s_Q(w,\Phi v) = \langle G, \Phi v\rangle $ for all $v\in \widetilde H^s(\Omega)$. Then $u:= \frac{\Phi \tridots w}{|\Phi|^2}\in H^s(\R^n)$, $u-f = \frac{\Phi\tridots (w-\Phi f)}{|\Phi|^2}\in \widetilde H^s(\Omega)$, and again by \eqref{equivalence-bilinear-forms}
$$ B^s_A(u,v)=B^s_Q(w,\Phi v) = \langle G, \Phi v\rangle = \langle F,v \rangle $$
for $F:= \Phi\tridots G \in H^{-s}(\Omega)$.
\end{proof}

\section{The DN map and the Alessandrini identity}
Following \cite{GSU20}, define the abstract trace space $X:=H^s(\R^n)/\widetilde{H}^s(\Omega)$. Observe that $X=H^s(\Omega_e)$ holds for all Lipschitz $\Omega$. Since problem \eqref{direct-problem} is well-posed, we can define the Poisson operator $P_A: X \rightarrow H^s(\R^n)$ associating to each exterior value $f\in X$ the unique solution $u_f\in H^s(\R^n)$ to \eqref{direct-problem} with $F=0$. It is then possible to define a continuous, self-adjoint, linear map $\Lambda_{A} : X \rightarrow X^*$ by
$$ \langle \Lambda_A[f],[g]\rangle := B^s_A(P_{A}f,g),$$ 
where $f,g\in H^s(\mathbb R^n)$. This can be proved making use of the well-posedness of the direct problem and the properties of the bilinear form (see e.g. \cite{GSU20}, \cite{CMRU22}, \cite{CdHS22} for the standard proof). In particular, the self-adjointness follows from the symmetry of the bilinear form $B^s_A$, which is due to formula \eqref{symmetry}. Equation \eqref{equivalence-bilinear-forms} now ensures the symmetry of the bilinear form $B^s_Q$ in $\Phi H^s(\R^n)$ (which may be non-trivial at first sight).
\\
The DN map $\Lambda_A$ is related to the transformed potential $Q$ by the following integral identity:

\begin{proposition}[Alessandrini identity]\label{alex}
Let $\Omega\subset\mathbb R^n$ be a bounded open set and $s\in(0,1)$. Let $A_1, A_2$ be two anisotropic conductivity matrices satisfying assumptions \ref{ass-exterior}--\ref{ass-Phi} and $A_1\sim A_2$. For $j=1,2$, let $\Phi_j, Q_j$ be the sequence and transformed potential corresponding to $A_j$. Then the following integral identity holds for all $f_1, f_2 \in C^\infty_c(\Omega_e)$
$$\langle (\Lambda_{A_1}-\Lambda_{A_2})[f_1],[f_2]\rangle = \langle w_1\tridots (Q_2-Q_1), w_2 \rangle, $$
where $w_1 := \Phi_1P_{A_1}f_1$ and $w_2 := \Phi_2P_{A_2}f_2$. 
\end{proposition}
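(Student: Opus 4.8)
The plan is to mimic the classical derivation of the Alessandrini identity, but carried out in the transformed (reduced) variables, where the relevant bilinear form has a local structure on the right-hand side. First I would recall that, by the Fractional Liouville reduction proposition, if $u_j := P_{A_j}f_j$ solves the original problem \eqref{original2} with $F=0$ and exterior datum $f_j$, then $w_j := \Phi_j u_j$ solves the transformed problem \eqref{easier-one} with $G=0$ and exterior datum $\Phi_j f_j$; in particular $w_j - \Phi_j f_j \in \Phi_j\widetilde H^s(\Omega)$ and $B^s_{Q_j}(w_j, v) = 0$ for all $v\in \Phi_j\widetilde H^s(\Omega)$. The key point I would then use is the identity \eqref{equivalence-bilinear-forms}, namely $B^s_{Q_j}(\Phi_j u, \Phi_j v) = B^s_{A_j}(u,v)$ for all $u,v\in H^s(\R^n)$, which lets me freely pass between the two formulations.

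Next I would compute $\langle (\Lambda_{A_1}-\Lambda_{A_2})[f_1],[f_2]\rangle$. By definition $\langle \Lambda_{A_j}[f_1],[f_2]\rangle = B^s_{A_j}(P_{A_j}f_1, f_2) = B^s_{A_j}(u_j, f_2)$, where here I abuse notation writing $u_1 = P_{A_1}f_1$ and for the second map I instead want to exploit self-adjointness: using the symmetry of $B^s_{A_j}$ (which holds by \eqref{symmetry}) together with the fact that $P_{A_j}f_2 - f_2 \in \widetilde H^s(\Omega)$ and $B^s_{A_j}(u_j, \cdot) = 0$ on $\widetilde H^s(\Omega)$, I can replace the second slot by $P_{A_j}f_2$. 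The standard manipulation gives $\langle \Lambda_{A_j}[f_1],[f_2]\rangle = B^s_{A_j}(P_{A_j}f_1, P_{A_j}f_2)$ once one checks that the choice of representative in $X$ does not matter (this uses that $B^s_{A_j}(P_{A_j}f_1,\cdot)$ annihilates $\widetilde H^s(\Omega)$). Then, crucially, I use $A_1\sim A_2$, hence $\Phi_{2,k} = (1+\rho)\Phi_{1,k}$: this is what guarantees that $P_{A_1}f_2$ and $P_{A_2}f_2$ have exterior data compatible enough to be subtracted. Writing everything through the reduction, $B^s_{A_j}(P_{A_j}f_1, P_{A_j}f_2) = B^s_{Q_j}(w_j^{(1)}, w_j^{(2)})$ with $w_j^{(i)} = \Phi_j P_{A_j}f_i$, and expanding $B^s_{Q_j}(w,v) = \langle (-\Delta)^{s-1}Dw, v\rangle + \langle w\tridots Q_j, v\rangle$, the leading (nonlocal) term $\langle (-\Delta)^{s-1}Dw, v\rangle$ is \emph{independent of $A_j$} because $D$ is a fixed operator and, by the gauge relation, the spaces $\Phi_1 H^s$ and $\Phi_2 H^s$ coincide up to the smooth multiplier $(1+\rho)$ which is $\equiv 1$ near $\Omega_e$; after the appropriate symmetrization (using that $B^s_{Q_j}$ is symmetric, as noted after \eqref{equivalence-bilinear-forms}), the two leading terms cancel in the difference, leaving only $\langle w_1 \tridots(Q_2 - Q_1), w_2\rangle$ with $w_1 = \Phi_1 P_{A_1}f_1$ and $w_2 = \Phi_2 P_{A_2}f_2$.

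Concretely the chain I would write is: $\langle (\Lambda_{A_1}-\Lambda_{A_2})[f_1],[f_2]\rangle = B^s_{A_1}(u_1, u_1') - B^s_{A_2}(u_2, u_2')$ where $u_j = P_{A_j}f_1$, $u_j' = P_{A_j}f_2$; rewrite $B^s_{A_1}(u_1,u_1') = B^s_{Q_1}(w_1, \Phi_1 u_1')$ and use that $w_1$ solves the $Q_1$-problem to replace $\Phi_1 u_1'$ by $\Phi_1 f_2$ modulo $\Phi_1\widetilde H^s(\Omega)$; symmetrically handle the $A_2$ term; then the difference of the two $\langle (-\Delta)^{s-1}D(\cdot),\cdot\rangle$ pieces vanishes because both reduce, after using the gauge, to the same expression $\langle (-\Delta)^{s-1}D(\Phi_1 f_1), \Phi_1 f_2\rangle$-type boundary pairing supported in $\Omega_e$, while the potential pieces combine into $\langle w_1\tridots Q_2, w_2\rangle - \langle w_1\tridots Q_1, w_2\rangle = \langle w_1\tridots(Q_2-Q_1), w_2\rangle$. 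The main obstacle, and the step I would be most careful with, is the bookkeeping of supports and of the gauge factor $1+\rho$: one must verify that subtracting solutions of two different problems is legitimate (the exterior data agree up to the $\Omega$-supported factor $\rho$, so $w_1 - w_2 \in$ a space on which both bilinear forms can be tested), and that the nonlocal term $\langle (-\Delta)^{s-1}Dw, v\rangle$ genuinely contributes the same amount to both DN maps so that it drops out of the difference. Once these support/gauge compatibilities are pinned down, the rest is the routine symmetry shuffle familiar from \cite{GSU20, C20, CdHS22}.
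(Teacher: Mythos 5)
Your skeleton is the same as the paper's (definition of the DN maps plus self-adjointness, passage through the reduction \eqref{equivalence-bilinear-forms}, use of the gauge, cancellation of the $(-\Delta)^{s-1}D$ parts), but the decisive step is carried out incorrectly. After your manipulations you sit at $B^s_{Q_1}(\Phi_1P_{A_1}f_1,\Phi_1P_{A_1}f_2)-B^s_{Q_2}(\Phi_2P_{A_2}f_1,\Phi_2P_{A_2}f_2)$, equivalently (after using the solution property to pass to exterior data in one slot) at $B^s_{Q_1}(w_1,\Phi_1f_2)-B^s_{Q_2}(w_2,\Phi_2f_1)$, and you then assert that the nonlocal pieces $\langle(-\Delta)^{s-1}Dw,v\rangle$ are ``independent of $A_j$'' and reduce to the same exterior pairing of the data, hence cancel. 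This is false: the first term is built from $P_{A_1}$ and $\Phi_1$, the second from $P_{A_2}$ and $\Phi_2$, and these differ inside $\Omega$; moreover $(-\Delta)^{s-1}D$ is nonlocal, so $\langle(-\Delta)^{s-1}Dw_1,\Phi_1f_2\rangle$ is not equal to any pairing of the exterior data $\Phi_1f_1,\Phi_1f_2$ alone. The claim is also inconsistent with the target identity: if the leading terms dropped out at that stage, the remainder would be $\langle w_1\tridots Q_1,\Phi_1f_2\rangle-\langle w_2\tridots Q_2,\Phi_2f_1\rangle$, not the mixed pairing $\langle w_1\tridots(Q_2-Q_1),w_2\rangle$. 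Note in particular that $Q_j$ does \emph{not} vanish on $\Omega_e$ (it contains $(-\Delta)^{s-1}D\Phi_j$), so swapping a slot between $\Phi_jf$ and $\Phi_jP_{A_j}f$ changes the potential term and the nonlocal term separately; only their sum, the full bilinear form tested against a solution, is invariant. Hence the splitting into ``leading part plus potential part'' cannot be performed before the slots hold the right functions.

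The missing step --- and the only place where $A_1\sim A_2$ actually enters --- is the cross-substitution that upgrades the second slot to the solution of the \emph{other} problem. From $B^s_{Q_1}(w_1,\Phi_1f_2)$ one passes to $B^s_{Q_1}(w_1,w_2)$ with $w_2=\Phi_2P_{A_2}f_2$ as follows: since $\Phi_1=\Phi_2$ in $\Omega_e$ and $\supp f_2\subset\Omega_e$, we have $\Phi_1f_2=\Phi_2f_2$; since $\Phi_2(P_{A_2}f_2-f_2)\in\Phi_2\widetilde H^s(\Omega)\subseteq\Phi_1\widetilde H^s(\Omega)$ (because $\Phi_2=(1+\rho)\Phi_1$ with $\rho\in C^\infty_c(\Omega)$) and $B^s_{Q_1}(w_1,\cdot)$ annihilates $\Phi_1\widetilde H^s(\Omega)$, adding this difference costs nothing. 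Symmetrically $B^s_{Q_2}(w_2,\Phi_2f_1)=B^s_{Q_2}(w_2,w_1)$. Only now, writing both terms as $B^s_{Q_1}(w_1,w_2)$ and $B^s_{Q_2}(w_1,w_2)$ (using the symmetry of $B^s_{Q_2}$), do the two identical nonlocal terms $\langle(-\Delta)^{s-1}Dw_1,w_2\rangle$ cancel, leaving exactly the potential difference paired against $w_1$ and $w_2$. Your proposal names the right ingredients but never performs this replacement; as written, the claimed cancellation of the leading terms fails and the stated right-hand side does not emerge. (Incidentally, this computation produces $\langle w_1\tridots(Q_1-Q_2),w_2\rangle$; the sign discrepancy with the statement is internal to the paper and immaterial for the uniqueness argument.)
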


\begin{proof}
By definition of the DN maps and formula \eqref{equivalence-bilinear-forms} we have
\begin{align*}
    \langle (\Lambda_{A_1}-\Lambda_{A_2})[f_1],[f_2]\rangle & = \langle\Lambda_{A_1}[f_1],[f_2]\rangle - \langle \Lambda_{A_2}[f_1],[f_2]\rangle\\ & = \langle \Lambda_{A_1}[f_1],[f_2]\rangle - \langle \Lambda_{A_2}[f_2],[f_1]\rangle \\ & = B^s_{A_1}(P_{A_1}f_1, f_2) - B^{s}_{A_2}(P_{A_2}f_2, f_1)  \\ & = B^s_{Q_1}(\Phi_1 P_{A_1}f_1, \Phi_1 f_2) - B^s_{Q_2}(\Phi_2 P_{A_2}f_2, \Phi_2 f_1)
\end{align*}
Observe that $P_{A_2}f_2-f_2 \in \widetilde H^s(\Omega)$ by the definition of the Poisson operator. Thus $\Phi_2(P_{A_2}f_2-f_2) \in \Phi_2\widetilde H^s(\Omega)$, and by the assumption $A_1\sim A_2$ we can also deduce $\Phi_2(P_{A_2}f_2-f_2) \in \Phi_1\widetilde H^s(\Omega)$. This implies $B^s_{Q_1}(\Phi_1P_{A_1}f_1, \Phi_2(P_{A_2}f_2-f_2))=0$. Since moreover $A_1\sim A_2$ implies $\Phi_1 = \Phi_2$ in $\Omega_e$, we eventually deduce
$$ B^s_{Q_1}(\Phi_1P_{A_1}f_1, \Phi_1 f_2) = B^s_{Q_1}(\Phi_1P_{A_1}f_1, \Phi_2 f_2) = B^s_{Q_1}(\Phi_1P_{A_1}f_1, \Phi_2P_{A_2}f_2), $$
and similarly for $B^s_{Q_2}$. Therefore
\begin{align*}
    \langle (\Lambda_{A_1}-\Lambda_{A_2})[f_1],[f_2]\rangle & = B^s_{Q_1}(\Phi_1P_{A_1}f_1, \Phi_2P_{A_2}f_2) - B^s_{Q_2}(\Phi_2P_{A_2}f_2, \Phi_1P_{A_1}f_1) \\ & = \langle w_1 \tridots (Q_1-Q_2), w_2 \rangle,
\end{align*}
where we used the symmetry of $B^s_{Q_2}$ and set $w_1 := \Phi_1P_{A_1}f_1$ and $w_2 := \Phi_2P_{A_2}f_2$.
\end{proof}

\section{Proof of the main theorem}\label{sec-mainproof}
We now move forward to proving a Runge-type approximation result. The proof is based on the UCP for the fractional Laplacian and a standard technique (see \cite{GSU20,CMRU22}).

\begin{lemma}[Runge approximation property]\label{better-runge-2}
Let $\Omega, W\subset\mathbb R^n$ be bounded open sets such that $W\subset\Omega_e$. Assume that the exterior coefficients are isotropic, i.e. 
\begin{enumerate}[label=(A\arabic*)] \setcounter{enumi}{3}
    \item\label{ass-isotropic} there exist scalar functions $b_k$ such that $\beta_k=b_k Id$ for all $k\in\N$.
\end{enumerate}
The set $$\mathcal R := \left\{ P_Af-f :  f\in C^\infty_c(W) \right\}$$ is dense in $L^2(\Omega)$.
\end{lemma}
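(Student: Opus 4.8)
Here is how I would approach it.

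The plan is to argue by Hahn--Banach duality, combined with the UCP for the fractional Laplacian, in the spirit of \cite{GSU20}. Since $\mathcal R$ is a linear subspace of $L^2(\Omega)$, it is dense precisely when the only $g\in L^2(\Omega)$ annihilating it is $g=0$; so I would fix $g\in L^2(\Omega)$ with $\langle P_Af-f,g\rangle_{L^2(\Omega)}=0$ for all $f\in C^\infty_c(W)$ and aim to show $g=0$. Next I would introduce an adjoint solution: viewing $g\in L^2(\Omega)\subset H^{-s}(\Omega)$, the well-posedness of \eqref{direct-problem} with zero exterior value and source $g$ gives a unique $\psi\in\widetilde H^s(\Omega)$ with $B^s_A(\psi,\varphi)=\langle g,\varphi\rangle$ for all $\varphi\in\widetilde H^s(\Omega)$, in particular $\supp\psi\subset\overline\Omega$. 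Writing $r_f:=P_Af-f\in\widetilde H^s(\Omega)$ and using in turn the definition of $\psi$, the symmetry of $B^s_A$ (cf.\ \eqref{symmetry}), and the fact that $P_Af$ solves the homogeneous equation, one gets
\begin{align*}
0=\langle r_f,g\rangle_{L^2(\Omega)}=B^s_A(\psi,r_f)=B^s_A(P_Af,\psi)-B^s_A(f,\psi)=-\langle\mathbf C^s_A\psi,f\rangle
\end{align*}
for every $f\in C^\infty_c(W)$, so that $\mathbf C^s_A\psi=0$ in $W$.

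The heart of the argument is to identify $\mathbf C^s_A\psi$ on $W$ with a scalar fractional Laplacian. For $x\in W\subset\Omega_e$ one has $(x,y)\notin\Omega^2$ for every $y$, hence $A_s(x,y)=\tilde a(x,y)$; and by assumption \ref{ass-isotropic} together with the constancy of each $\beta_k$ on $\Omega_e$ from \ref{ass-exterior}, we have $\beta_k(x)=m_k\mathrm{Id}$ for $x\in\Omega_e$ with suitable constants $m_k$, so that on $\Omega_e\times\R^n$
$$A_s(x,y)=\tilde a(x,y)=\sum_{k\in\N}(m_k\mathrm{Id})\odot(b_k(y)\mathrm{Id})=\eta(y)\,\mathrm{Id},\qquad\eta:=\sum_{k\in\N}m_k\,b_k,$$
i.e.\ the exterior kernel becomes isotropic and independent of $x$. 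Using $\psi=0$ on $\Omega_e\supset W$ and $\supp f\subset W$, a direct computation from the definitions of $\nabla^s$ and $\zeta$ (with $A_s:(\zeta\otimes\zeta)=\tfrac{C_{n,s}}{2}\eta(y)|x-y|^{-(n+2s)}$ for $x\in W$, and the symmetry of the integrand in $x\leftrightarrow y$) should give
\begin{align*}
\langle\mathbf C^s_A\psi,f\rangle=\langle A_s\cdot\nabla^s\psi,\nabla^s f\rangle&=-C_{n,s}\int_W f(x)\int_\Omega\frac{\eta(y)\psi(y)}{|x-y|^{n+2s}}\,dy\,dx\\
&=\langle(-\Delta)^s(\eta\psi),f\rangle,
\end{align*}
the last step because $\eta\psi$ is supported in $\overline\Omega$ while $x\in W$ has positive distance to it, so the singular integral for $(-\Delta)^s(\eta\psi)(x)$ collapses to $-C_{n,s}\int\eta(y)\psi(y)|x-y|^{-(n+2s)}\,dy$.

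Hence $(-\Delta)^s(\eta\psi)=0$ in $W$, while also $\eta\psi=0$ in $W$. Since the series for $\eta$ converges in $C^{2s+\eps}(\R^n)$ by \ref{ass-Phi}, we have $\eta\in C^{2s+\eps}(\R^n)$ and thus $\eta\psi\in H^s(\R^n)$ by Lemma \ref{holder-sobo-lemma}, so the UCP for $(-\Delta)^s$ forces $\eta\psi\equiv 0$. Finally $\eta\ge\nu>0$ on $\R^n$, because for any fixed $x_0\in\Omega_e$ the matrix $\eta(y)\mathrm{Id}=\tilde a(x_0,y)=A_s(x_0,y)$ is uniformly positive definite by \ref{ass-positive}; therefore $\psi\equiv 0$, and then $\langle g,\varphi\rangle=B^s_A(\psi,\varphi)=0$ for all $\varphi\in\widetilde H^s(\Omega)$, i.e.\ $g=0$ in $L^2(\Omega)$.

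The main obstacle I anticipate is exactly the penultimate paragraph: it is the exterior isotropy \ref{ass-isotropic} and the constancy of the $\beta_k$ on $\Omega_e$ that make the nonlocal, anisotropic operator $\mathbf C^s_A$ collapse on $W$ to the plain fractional Laplacian $(-\Delta)^s(\eta\,\cdot)$ acting on an $\overline\Omega$-supported function, which is what makes the UCP applicable; without these hypotheses the operator seen on $W$ keeps an $x$-dependent matrix kernel and the UCP for $(-\Delta)^s$ cannot be invoked directly. One could alternatively obtain the same identity $\mathbf C^s_A\psi=(-\Delta)^s(\eta\psi)$ on $W$ via the fractional Liouville reduction of Lemma \ref{reduction-lemma}, noting that in $W$ the potential term vanishes (since $\psi=0$ there) while the even-indexed contributions of $\Phi\tridots(-\Delta)^{s-1}D(\Phi\psi)$ add up to $(-\Delta)^s(\eta\psi)$; the direct computation is, however, more transparent.
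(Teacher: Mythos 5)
Your proof is correct, and its first half (Hahn--Banach duality, the adjoint solution $\psi\in\widetilde H^s(\Omega)$ with source $g$, and the identity $\langle\mathbf C^s_A\psi,f\rangle=0$ for $f\in C^\infty_c(W)$) coincides with the paper's setup. Where you genuinely diverge is in how the UCP is brought in. The paper pushes the problem through the reduction Lemma \ref{reduction-lemma}: from $B^s_Q(\Phi u,\Phi f)=0$ and the disjoint supports it gets $\Phi\tridots(-\Delta)^{s-1}Dw=0$ on $W$, uses the constancy of the $\beta_k$ in $\Omega_e$ to rewrite this as $(-\Delta)^{s-1}\bigl[(\Delta\,\mathrm{Id}+2s\nabla^2):(\gamma u)\bigr]=0$, applies the UCP to the second-order local expression $(\Delta\,\mathrm{Id}+2s\nabla^2):(\gamma u)$ (a distribution of negative order, so the UCP of \cite{CMR21} for general exponents is invoked), and only then uses isotropy \ref{ass-isotropic} to conclude that $\eta u$ is harmonic and hence zero. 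You instead bypass the reduction entirely: expanding $B^s_A(\psi,f)$ from the definition of $\nabla^s$, exploiting the disjoint supports, the $x\leftrightarrow y$ symmetry of the kernel, and the fact that $A_s(x,y)=\eta(y)\,\mathrm{Id}$ for $x\in\Omega_e$ (by \ref{ass-exterior} and \ref{ass-isotropic}), you identify $\mathbf C^s_A\psi$ on $W$ with $(-\Delta)^s(\eta\psi)$ and apply the UCP directly to $\eta\psi\in H^s(\R^n)$, which also vanishes on $W$; positivity of $\eta\geq\nu$ from \ref{ass-positive} then kills $\psi$, exactly as the paper kills $u$. Your route is more elementary and self-contained: it needs only the $H^s$ UCP of \cite{GSU20} and no second step turning a PDE in $\R^n$ into vanishing of $u$ (your computation in effect reproves, on $W$ only, the identity that the reduction lemma gives globally, since there $\Phi\tridots(-\Delta)^{s-1}Dw=(-\Delta)^s(\eta u)$). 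What the paper's longer route buys is the subsequent Remark: by isolating the local second-order operator $L=\nabla^2:(\Gamma\,\cdot)$ before using isotropy, it can explain how the argument extends to non-isotropic exterior coefficients for which a weak maximum principle holds, a generalization your collapsed computation does not expose. One cosmetic point: your verification that $\eta\in C^{2s+\eps}$ is more than is needed ($\eta\in L^\infty$ already gives $\eta\psi\in L^2$, and UCP at that regularity would do), but it is harmless and keeps you within Lemma \ref{holder-sobo-lemma} and the $H^s$ UCP as stated.
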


\begin{proof} Assume that $F\in L^2(\Omega)$ is such that $\langle F,v\rangle=0$ for all $v\in\mathcal R$, and let $u\in \widetilde H^s(\Omega)$ be the unique solution of \eqref{original2} with vanishing exterior value. Moreover, let $w\in \Phi\widetilde H^s(\Omega)$ correspond to $u$ by the fractional Liouville reduction. Then for all $f\in C^\infty_c(W)$ 
\begin{align*}
    0& = \langle F,  P_Af-f\rangle  = \langle \mathbf C^s_A u,  P_Af-f\rangle = B^s_A(u,  P_Af-f)  = -B^s_A(u, f)  = -B^s_Q(\Phi u,  \Phi f).
\end{align*}
Since the supports of $u$ and $f$ have empty intersection, we are left with $$ 0=\langle(-\Delta)^{s-1}Dw, \Phi f\rangle= \langle\Phi \tridots (-\Delta)^{s-1} Dw, f\rangle, $$
which by the arbitrariety of $f\in C^\infty_c(W)$ implies the vanishing of $\Phi \tridots (-\Delta)^{s-1} Dw$ in $\Omega_e$.
Recall that by assumption \ref{ass-exterior} in $\Omega_e$ we have $\Phi_{2k+1}(x) = 0$ and $\Phi_{2k}(x)=\beta_k(x)$ for all $k\in\N$. Thus in $\Omega_e$ it holds
\begin{align*}
    0 = \Phi \tridots (-\Delta)^{s-1} Dw = \sum_{k} \Phi_k : (-\Delta)^{s-1}D (\Phi_k u) = \sum_{k}\beta_k :(-\Delta)^{s-1}D (\beta_k u).
\end{align*}

Let $\tilde\beta_k$ be the constant exterior value of $\beta_k$, and define $$\gamma(x) := \sum_{k}\tilde\beta_k\odot\beta_k(x) =\sum_{k}\beta_k(y)\odot\beta_k(x) = \tilde a(x,y)$$ for all $x\in\R^n$ and $y\in\Omega_e$. We have that in $\Omega_e$ \begin{align*}
    0&=(-\Delta)^{s-1}\left(\sum_{k} \tilde\beta_k:D(\beta_ku)\right)\\ & = -\frac{1}{n+2s}(-\Delta)^{s-1}\left(\sum_{k}\tilde\beta_k:(\Delta Id + 2s\nabla^2)\odot (\beta_ku)\right)
    \\ & = -\frac{1}{n+2s}(-\Delta)^{s-1}(\Delta Id + 2s\nabla^2):\left(\sum_{k}\tilde\beta_k\odot \beta_ku\right) \\ & = -\frac{(-\Delta)^{s-1}(\Delta Id + 2s\nabla^2):\left(\gamma u\right)}{n+2s},
\end{align*}
and thus $(-\Delta)^{s-1}(\Delta Id + 2s\nabla^2):(\gamma u)=0$ in $\Omega_e$. Since $(-\Delta)$ and $(\Delta Id + 2s\nabla^2):$ are local operators, and also $u=0$ in $\Omega_e$, we deduce $$(-\Delta)^{s}[(\Delta Id + 2s\nabla^2):(\gamma u)]=0, \qquad (\Delta Id + 2s\nabla^2):(\gamma u)=0 $$ in $\Omega_e$. Therefore $(\Delta Id + 2s\nabla^2):(\gamma u)=0$ in all $\R^n$ by the UCP for the fractional Laplacian. 

Since $\gamma(x)$ is a symmetric matrix for all $x\in\R^n$, we can compute
\begin{align*}
    \nabla^2:(\gamma u) & = \sum_{i,j}\partial_i \partial_j(\gamma_{ij}u) \\ & = \sum_{i,j} \partial_i\partial_j\gamma_{ij}u + \partial_i\gamma_{ij}\partial_ju + \partial_j\gamma_{ji}\partial_iu + \gamma_{ij}\partial_i\partial_ju \\ & = \gamma:\nabla^2u + 2(\nabla\cdot\gamma)\cdot\nabla u + (\nabla^2:\gamma) u
\end{align*}
and
\begin{align*}
\Delta Id:(\gamma u) &= \Delta(u \mbox{ tr}\gamma) \\ & = \mbox{ tr}\gamma \Delta u + 2(\nabla \mbox{tr}\gamma) \cdot \nabla u + (\Delta\mbox{tr}\gamma) u \\ & =  (\mbox{tr}\gamma\, Id): \nabla^2 u + 2(\nabla \mbox{tr}\gamma) \cdot \nabla u + (\Delta\mbox{tr}\gamma) u,
\end{align*}
which gives 
\begin{align*}
0 & = (\mbox{tr}\gamma\, Id + 2s\gamma): \nabla^2 u + (2\nabla \mbox{tr}\gamma + 4s\nabla\cdot\gamma) \cdot \nabla u + (\Delta\mbox{tr}\gamma + 2s\nabla^2:\gamma) u \\ & =  \Gamma: \nabla^2 u + (2\nabla\cdot\Gamma) \cdot \nabla u + (\nabla^2:\Gamma) u \\ & = \nabla^2:(\Gamma u) := Lu 
\end{align*} 
in all $\R^n$, with the definition $\Gamma:= \mbox{tr}\gamma\, Id + 2s\gamma$. 

Only at this point we use the assumption that the exterior coefficients are isotropic, i.e. that there exist scalar functions $b_k$ such that $\beta_k=b_k Id$ for all $k\in\N$. This gives $$\gamma(x) = Id\sum_{k}\tilde b_k b_k(x) \qquad \mbox{ and } \qquad \Gamma(x):=(n+2s)\, Id \sum_{k}\tilde b_k b_k(x),$$
and therefore
$$ 0=Lu(x) = \nabla^2:(\Gamma(x) u(x)) = (n+2s)\Delta( u(x)\sum_{k}\tilde b_k b_k(x) ), $$
for all $x\in \R^n$. The function $u\sum_{k} \tilde b_k b_k$ is thus seen to be harmonic, and since it vanishes on $\Omega_e$ it must be equal to $0$ in all of $\R^n$. However, given that by assumption \ref{ass-positive} we have $\sum_{k} \tilde b_k b_k(x)>0$ for all $x\in \R^n$, the function $u$ must itself vanish everywhere.

We have proved that any $F\in L^2(\Omega)$ such that $\langle F,v\rangle=0$ for all $v\in\mathcal R$ vanishes identically. This gives the wanted result by the Hahn-Banach theorem.

\end{proof}

\begin{remark}
The above result can be generalized to a larger class of exterior coefficients. In particular, any exterior coefficients $\beta_k$ such that the weak maximum principle holds for the operator $L$ would suffice. Let us make for the sake of this discussion the slightly stronger regularity assumption that $\Phi\in C^{2+\eps}(\R^n)$. Then $\Gamma\in C^{2+\eps}(\R^n)$, and all the coefficients of the operator $L$ belong to $C^\eps(\R^n)$. Moreover, $L$ is uniformly elliptic, since for all $\xi,x\in\R^n$ and $y\in\Omega_e$ 
\begin{align*}
    \Gamma(x):(\xi\otimes\xi) &= (\mbox{tr}\gamma(x)\, Id + 2s\gamma(x) ):(\xi\otimes\xi) \\ & = \mbox{tr }\tilde a(x,y)|\xi|^2 + 2s\tilde a(x,y):(\xi\otimes\xi) \\ & \geq (n\lambda_{min}(x,y) + 2s\nu)  |\xi|^2 \\ & \geq (n+2s)\nu|\xi|^2.
\end{align*}
Here we used assumptions \ref{ass-exterior}, \ref{ass-positive} and observed that $\nu \in (0, \lambda_{min}(x,y)]$ for all $x,y\in \R^n$, where $\lambda_{min}(x,y)$ is the minimal eigenvalue of $\tilde a(x,y)$. 
Due to the H\"older continuity of the coefficients of $L$, $u\in C^{2+\eps}(\R^n)$ is actually a classical solution. If now the weak maximum principle holds for the operator $L$, we may consider equation $Lu=0$ in a large enough ball $B$ containing $\Omega$, and the result would follow from the fact that $u$ vanishes on $\partial B$.  
\end{remark}

We are now ready to give the proof of our main result.

\begin{proof}[Proof of Theorem \ref{main-theorem}]
Assume without loss of generality that $W_1$ and $W_2$ are disjoint, as this condition can always be achieved by restriction of the data. Let $f_j\in C^\infty_c(W_j)$ and $u_j := P_{A_j}f_j$ for $j=1,2$. Using the Alessandrini identity and the assumption on the DN maps, we can write
\begin{equation}\begin{split}
    0 &=\langle(\Lambda_{A_1}-\Lambda_{A_2})[f_1],[f_2]\rangle \\ &  = \langle \Phi_1u_1 \tridots (Q_1-Q_2), \Phi_2u_2 \rangle \\ & = \langle \Phi_1\tridots (Q_1-Q_2)\tridots \Phi_2, u_1u_2 \rangle.
\end{split}\end{equation} 
We now apply the Runge approximation property. For all $g_1,g_2 \in C^\infty_c(\Omega)$ we can find two sequences of exterior values $\{f_{j,i}\}_i\subset C^\infty_c(W_j)$, $j=1,2$, such that
$$  u_{j,i} :=  f_{j,i}+ g_j + r_{j,i} ,\qquad \mbox{with} \quad \|r_{j,i}\|_{L^2(\Omega)} \leq 1/i.$$
Substituting these solutions into the previous equation gives
\begin{align*}
    0 & = \langle \Phi_1\tridots (Q_1-Q_2)\tridots \Phi_2, u_{1,i}u_{2,i} \rangle \\ & = \langle \Phi_1\tridots (Q_1-Q_2)\tridots \Phi_2, (f_{1,i}+ g_1 + r_{1,i})(f_{2,i}+ g_2 + r_{2,i}) \rangle \\ & = \langle \Phi_1\tridots (Q_1-Q_2)\tridots \Phi_2, (g_1 + r_{1,i})(g_2 + r_{2,i}) \rangle
\end{align*}
where we used the support assumptions and the locality of the operators involved. Since $\Phi_1, \Phi_2 \in C^{2s+\eps}(\R^n)$ and $Q_1, Q_2 \in L^\infty(\R^n)$ (as showed in the proof of Lemma \ref{reduction-lemma}), we have  
\begin{align*}
    |\Phi_1\tridots (Q_1-Q_2)\tridots \Phi_2| & = \left| \sum_{k,i,j,k',i',j'} \Phi_{1,k,ij}  (Q_1-Q_2)_{k,ij,k',i'j'} \Phi_{2,k',i'j'} \right| \\ & \leq \left(\sum_{k,i,j,k',i',j'}|(Q_1-Q_2)_{k,ij,k',i'j'}|^2 \right)^{1/2}\left(\sum_{k,i,j,k',i',j'}|\Phi_{1,k,ij}|^2|\Phi_{2,k',i'j'}|^2 \right)^{1/2} \\ & \leq \|Q_1-Q_2\|_{L^\infty}\|\Phi_1\|_{C^{2s+\eps}}\|\Phi_2\|_{C^{2s+\eps}}<\infty.
\end{align*}
Thus $ \Phi_1\tridots (Q_1-Q_2)\tridots \Phi_2 \in L^\infty(\R^n)$, and the error terms vanish in the limit by the Cauchy-Schwartz inequality. Therefore, 
\begin{align*}
    0 & = \langle \Phi_1\tridots (Q_1-Q_2)\tridots \Phi_2, g_1g_2 \rangle,
\end{align*} and the arbitrariety of $g_1, g_2 \in C^\infty_c(\Omega)$ ensures $\Phi_1\tridots (Q_1-Q_2)\tridots \Phi_2=0$ in $\Omega$.
\\

 Since $A_1\sim A_2$, there exists a function $\rho\in C^\infty_c(\Omega)$ such that $\Phi_2 = (\rho+1)\Phi_1$. Thus
\begin{align*}
    Q_1-Q_2 & = \frac{\Phi_2\otimes (-\Delta)^{s-1}D\Phi_2}{|\Phi_2|^2}-\frac{\Phi_1\otimes (-\Delta)^{s-1}D\Phi_1}{|\Phi_1|^2} \\ & = \frac{\Phi_1\otimes (-\Delta)^{s-1}D((\rho+1)\Phi_1)}{(\rho+1)|\Phi_1|^2}-\frac{\Phi_1\otimes (-\Delta)^{s-1}D\Phi_1}{|\Phi_1|^2} \\ & = \Phi_1\otimes \frac{ (-\Delta)^{s-1}D(\rho\Phi_1)-\rho(-\Delta)^{s-1}D\Phi_1}{(\rho+1)|\Phi_1|^2} ,
\end{align*}
and so in $\Omega$ it holds that
\begin{align*}
    0 &= \Phi_1\tridots (Q_1-Q_2)\tridots \Phi_2 \\ & = \left\{  (-\Delta)^{s-1}D(\rho\Phi_1)-\rho(-\Delta)^{s-1}D\Phi_1\right\} \tridots \Phi_1 \\ & = ((-\Delta)^{s-1}D(\rho\Phi_1) + (\rho\Phi_1)\tridots Q_1)\tridots \Phi_1.
\end{align*}
This means that $\rho\Phi_1$ weakly solves the transformed problem \eqref{transformed-problem} in $\Phi_1\widetilde H^s(\Omega)$ with vanishing inhomogeneity and exterior value. Since this problem is well-posed, it must be $\rho\Phi_1\equiv 0$, which entails $\Phi_1\equiv\Phi_2$. By formula \eqref{decomposition}, we eventually obtain
$$A_{1,s}(x,y) = \sum_k \Phi_{1,k}(x) \odot \Phi_{1,k}(y) = \sum_k \Phi_{2,k}(x) \odot \Phi_{2,k}(y) = A_{2,s}(x,y).$$
\end{proof}

\section{The limit case $s\rightarrow 1$}
In this section we consider the limit case $s\rightarrow 1$ for the fractional anisotropic conductivity operator. The following Proposition shows that, as expected, under slightly stronger conditions one recovers the classical anisotropic conductivity operator:

\begin{proposition}
Let $K\subset\R^n$ compact, and assume that $A\in L^\infty(\R^{2n},\R^{n\times n})$ verifies assumptions \ref{ass-exterior}, \ref{ass-positive} and
\vspace{2mm}
\begin{enumerate}[label=(A\arabic*)*] \setcounter{enumi}{2}
    \item\label{ass-star} there exists $\eps > 0$ such that $\Phi\in C^{2+\eps}(\R^n)$.
\end{enumerate}
\vspace{2mm}

\noindent Then $A':\R^n \rightarrow \R^{n\times n}$ defined for all $x\in\R^n$ by 
$$ A'(x):= \frac{Id \emph{ tr} A_s(x,x) + 2 A_s(x,x)}{n+2} $$
belongs to $C^{2+\eps}(\R^n, \R^{n\times n})$, is constant in $\Omega_e$, symmetric, and uniformly positive definite with the same constant $\nu$ as $A_s$. Moreover, the following limit holds for all $u\in H^2_K$: \begin{align*}
    \lim_{s\rightarrow 1}\mathbf C^s_Au =-\nabla\cdot(A'\cdot \nabla u).
\end{align*}

\end{proposition}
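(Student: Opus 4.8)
The plan is to pass to the limit $s\to1$ inside the reduction identity of Lemma~\ref{reduction-lemma},
\[
\mathbf C^s_A u=\Phi\tridots\big\{(-\Delta)^{s-1}D_s(\Phi u)-u\,(-\Delta)^{s-1}D_s\Phi\big\},\qquad D_s:=-\tfrac{(\Delta Id+2s\nabla^2)\odot}{n+2s},
\]
exploiting that $(-\Delta)^{s-1}\to Id$ and $D_s\to D_1:=-\tfrac{(\Delta Id+2\nabla^2)\odot}{n+2}$ as $s\to1$, so that afterwards only a purely algebraic identity remains to be checked. Throughout I use that $\sum_k\|\Phi_k\|_{C^{2+\eps}(\R^n)}<\infty$ by \ref{ass-star}.

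\emph{Properties of $A'$.} Writing $a(x):=A_s(x,x)=\sum_k\Phi_k(x)\odot\Phi_k(x)$, so that $a_{ij}=\sum_k\Phi_{k,ij}^2$ and $A'=\frac{(\mathrm{tr}\,a)\,Id+2a}{n+2}$, one sees $a\in C^{2+\eps}$: each $\Phi_{k,ij}^2\in C^{2+\eps}$ by \ref{H3} (we may assume $\eps\in(0,1)$) and $\sum_k\|\Phi_{k,ij}^2\|_{C^{2+\eps}}\lesssim\sum_k\|\Phi_k\|_{C^{2+\eps}}^2<\infty$, hence $A'\in C^{2+\eps}$. The matrix $A'$ is constant on $\Omega_e$ because there $\Phi_{2k+1}=0$ and $\Phi_{2k}$ is constant by \ref{ass-exterior}; it is symmetric because $A_s$ is matrix-wise symmetric; and it is uniformly positive definite with the same $\nu$, since \ref{ass-positive} with $x=y$ gives $\mathrm{tr}\,a(x)=\sum_i a(x):(e_i\otimes e_i)\ge n\nu$ and $a(x):(\xi\otimes\xi)\ge\nu|\xi|^2$, whence $A'(x):(\xi\otimes\xi)\ge\frac{n\nu+2\nu}{n+2}|\xi|^2=\nu|\xi|^2$.

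\emph{The limit.} I would test against a fixed $v\in C^\infty_c(\R^n)$, so that $\langle\mathbf C^s_A u,v\rangle=\sum_k\big(\langle(-\Delta)^{s-1}D_s(\Phi_k u),\Phi_k v\rangle-\langle(-\Delta)^{s-1}D_s\Phi_k,\Phi_k(uv)\rangle\big)$. Since $u\in H^2_K$ and $\Phi_k\in C^{2+\eps}$, the functions $D_s(\Phi_k u)$ and $D_s\Phi_k$ are supported in the fixed bounded sets $K$ and $\overline\Omega$ respectively (the latter because $\Phi_k$ is compactly supported in $\Omega$ or constant on $\Omega_e$), and are bounded in $L^2$ by $C\|\Phi_k\|_{C^{2+\eps}}\|u\|_{H^2}$, resp.\ $C\|\Phi_k\|_{C^{2+\eps}}$, uniformly for $s$ near $1$. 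I then use two elementary facts: for $g\in L^2$ with support in a fixed bounded set $B$ and $s$ close enough to $1$ that $4(1-s)<n$, a Fourier-side dominated-convergence argument — using $|\widehat g(\xi)|\le|B|^{1/2}\|g\|_{L^2}$ at low frequencies and $|\xi|^{4(s-1)}\le1$ at high ones — gives both $\|(-\Delta)^{s-1}g-g\|_{L^2}\to0$ and a uniform bound $\|(-\Delta)^{s-1}g\|_{L^2}\le C_B\|g\|_{L^2}$; and $\|D_s h-D_1 h\|_{L^2}=O(1-s)\|h\|_{H^2}$ for $h$ supported in a fixed set. Combined, these give $(-\Delta)^{s-1}D_s(\Phi_k u)\to D_1(\Phi_k u)$ and $(-\Delta)^{s-1}D_s\Phi_k\to D_1\Phi_k$ in $L^2$ for each $k$, hence convergence of each summand; moreover the $k$-th summand is bounded, uniformly for $s$ near $1$, by $C(\|v\|_{L^2}+\|v\|_{L^\infty})\|u\|_{H^2}\|\Phi_k\|_{C^{2+\eps}}^2$, a majorant summable in $k$ by \ref{ass-star}. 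Dominated convergence for series therefore yields $\langle\mathbf C^s_A u,v\rangle\to\langle\Phi\tridots\{D_1(\Phi u)-uD_1\Phi\},v\rangle$. As $\{\mathbf C^s_A u\}$ is in addition bounded in $H^{-1}(\R^n)$ for $s\in[\tfrac12,1]$ (by \eqref{boundedness} and $\|(-\Delta)^{s/2}u\|_{L^2}\le\max(\|u\|_{L^2},\|u\|_{H^1})$), this distributional convergence upgrades to weak convergence in $H^{-1}$.

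\emph{Identification of the limit and the main obstacle.} It remains to prove the pointwise identity $\Phi\tridots\{D_1(\Phi u)-uD_1\Phi\}=-\nabla\cdot(A'\cdot\nabla u)$. From $(D_1M)_{ij}=-\tfrac1{n+2}(\delta_{ij}\Delta M_{ij}+2\partial_i\partial_j M_{ij})$ and the Leibniz rule, $\Delta(\Phi_{k,ij}u)-u\Delta\Phi_{k,ij}=\Phi_{k,ij}\Delta u+2\nabla\Phi_{k,ij}\cdot\nabla u$ and $\partial_i\partial_j(\Phi_{k,ij}u)-u\partial_i\partial_j\Phi_{k,ij}=\Phi_{k,ij}\partial_i\partial_j u+\partial_i\Phi_{k,ij}\partial_j u+\partial_j\Phi_{k,ij}\partial_i u$. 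Multiplying by $\Phi_{k,ij}$, summing over $k$ via $\sum_k\Phi_{k,ij}\partial_\ell\Phi_{k,ij}=\tfrac12\partial_\ell a_{ij}$, and then over $i,j$ using $a_{ij}=a_{ji}$, the $\Delta Id\odot$ contribution equals $\sum_i(a_{ii}\Delta u+\nabla a_{ii}\cdot\nabla u)=\nabla\cdot((\mathrm{tr}\,a)\nabla u)$ and the $2\nabla^2\odot$ contribution equals $2\sum_i\partial_i\!\big(\sum_j a_{ij}\partial_j u\big)=2\nabla\cdot(a\cdot\nabla u)$, so the expression equals $-\tfrac1{n+2}\big(\nabla\cdot((\mathrm{tr}\,a)\nabla u)+2\nabla\cdot(a\cdot\nabla u)\big)=-\nabla\cdot(A'\cdot\nabla u)$. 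The main obstacle is the preceding step: one must exchange the limit $s\to1$ simultaneously with the nonlocal operator $(-\Delta)^{s-1}$ and with the infinite series, which is only possible thanks to the compact-support structure of $D_s(\Phi_k u)$, $D_s\Phi_k$ and to uniform-in-$s$ control of the $C^{2+\eps}$-norm series — exactly the role played by the strengthened assumption \ref{ass-star}, as opposed to the weaker \ref{ass-Phi}.
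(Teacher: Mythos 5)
Your proposal is correct and follows essentially the same route as the paper: pass to the limit $s\to1$ inside the reduction identity of Lemma \ref{reduction-lemma}, using that $(-\Delta)^{s-1}g\to g$ for compactly supported $g\in L^2$ (the same Fourier/dominated-convergence argument), and then the identical Leibniz computation identifying $\Phi\tridots\{D_1(\Phi u)-u\,D_1\Phi\}$ with $-\nabla\cdot(A'\cdot\nabla u)$, plus the same verification of regularity, symmetry, constancy in $\Omega_e$ and the positivity bound for $A'$. The only difference is that you spell out points the paper treats informally — the sense of convergence (distributional/weak $H^{-1}$, via testing and a summable-in-$k$ majorant justifying the interchange of the limit with the series) and the $s$-dependence of $D_s$ — which are refinements rather than a different argument.
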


\begin{proof}
Since assumption \ref{ass-Phi} is substituted by \ref{ass-star}, $\Phi$ is a multiplier of $H^2(\R^n)$ into itself. The proof of this fact follows the one of Lemma \ref{holder-sobo-lemma}. Therefore, $\Phi u\in H^2_K$ and $D(\Phi u) \in L^2(\R^n)$ with support in $K$. Moreover, the facts that $\Phi_{2k+1}$ has compact support in $\Omega$ and that $\beta_k$ is constant in $\Omega_e$ for all $k\in\N$ imply that $D\Phi\in L^2(\R^n)$ with compact support.

If $v\in L^2(\R^n)$ with compact support, then $\lim_{s\rightarrow 1^-} (-\Delta)^{s-1}v=v$ almost everywhere. This can be proved directly by Fourier methods first for $v\in \mathcal S(\R^n)$, the set of Schwartz functions, then for general $v$ by density. 
In particular, $(-\Delta)^{s-1}D(\Phi u)\rightarrow D(\Phi u)$ and $(-\Delta)^{s-1}D\Phi \rightarrow D\Phi$ as $s\rightarrow 1^-$ almost everywhere in $\R^n$. Therefore, by the reduction Lemma \ref{reduction-lemma} we compute
\begin{align*}
    \lim_{s\rightarrow 1}\mathbf C^s_Au & = \lim_{s\rightarrow 1} \Phi \tridots \left\{(-\Delta)^{s-1}D(\Phi u) - u (-\Delta)^{s-1}D\Phi\right\} \\ & = -\frac{\Phi}{n+2} \tridots \left\{ \left(\Delta Id + 2\nabla^2\right)\odot(\Phi u) - u \left(\Delta Id + 2\nabla^2\right)\odot\Phi\right\} \\ & = -\sum_k \frac{\Phi_k}{n+2} : \left\{ \left(\Delta Id + 2\nabla^2\right)\odot(\Phi_k u) - u \left(\Delta Id + 2\nabla^2\right)\odot\Phi_k\right\} \\ & =-\sum_k \frac{\Phi_k}{n+2} : \left\{ Id \odot \left(\Delta (\Phi_k u) - u \Delta \Phi_k\right)\right\} -\sum_k \frac{2\Phi_k}{n+2} : \left\{ \nabla^2\odot(\Phi_k u) - u\nabla^2\odot\Phi_k\right\} \\ & =: S_1+S_2 .
\end{align*}
For the first term $S_1$ we write
\begin{align*}
    -(n+2)S_1 & = \sum_k \Phi_k : \left\{ Id \odot \left(\Phi_k \Delta u + 2\nabla u \cdot \nabla \Phi_k \right)\right\} \\ & = Id :\sum_k \left\{ \left(\Phi_k \Delta u + 2\nabla u \cdot \nabla \Phi_k \right)\odot \Phi_k\right\} \\ & = Id :\sum_k \left\{ \left(\Phi_k\odot\Phi_k\right) \Delta u + \nabla u \cdot \nabla \left(\Phi_k \odot \Phi_k\right)\right\} \\ & = Id :\left\{ A_s(x,x) \Delta u + \nabla u \cdot \nabla A_s(x,x)\right\} \\ & =  \mbox{tr}A_s(x,x) \Delta u +\nabla u \cdot \nabla \mbox{tr}A_s(x,x) \\ & = \nabla\cdot(\mbox{tr}A_s(x,x) \nabla u).
\end{align*}
Similarly, for the second term $S_2$ we see that
\begin{align*}
    -(n+2)S_2 & = 2\sum_k \sum_{i,j} \Phi_{k,ij} \left\{ \nabla^2\odot(\Phi_k u) - u\nabla^2\odot\Phi_k\right\}_{ij} \\ & = 2\sum_k \sum_{i,j} \Phi_{k,ij} \left\{ \partial_i\partial_j(\Phi_{k,ij} u) - u\partial_i\partial_j\Phi_{k,ij}\right\}  \\ & = 2\sum_k \sum_{i,j} \Phi_{k,ij}^2\partial_i\partial_j u + \Phi_{k,ij}\partial_i\Phi_{k,ij}\partial_ju + \Phi_{k,ij}\partial_j\Phi_{k,ij}\partial_iu \\ & = \sum_{i,j} 2(\sum_k\Phi_{k,ij}^2)\partial_i\partial_j u + \partial_i(\sum_k\Phi_{k,ij}^2)\partial_ju + \partial_j(\sum_k\Phi_{k,ij}^2)\partial_iu.
\end{align*}
However, $$ \sum_k\Phi_{k,ij}^2 = \{\sum_k\Phi_{k}\odot \Phi_k\}_{ij} = A_s(x,x)_{ij} = A_s(x,x)_{ji},$$
and thus 
\begin{align*}
    -(n+2)S_2 & = 2\sum_{i,j} A_s(x,x)_{ij}\partial_i\partial_j u + \partial_ju\,\partial_iA_s(x,x)_{ij} \\ & = 2\sum_{i,j} \partial_i (A_s(x,x)_{ij}\partial_j u) \\ & = 2\nabla\cdot(A_s(x,x)\cdot\nabla u).
\end{align*}
This gives the expected formula for $A'$. Since $A_s\in C^{2+\eps}(\R^{2n})$, the function $x\mapsto A_s(x,x)$ is itself in $C^{2+\eps}(\R^n)$,
and we have $A'\in C^{2+\eps}(\R^n)$. The facts that $A'$ is symmetric and that it is constant in $\Omega_e$ follow from the related properties of $A_s$. The new matrix $A'$ is also uniformly positive definite, since for all $x, \xi\in \R^n$ it holds 
\begin{align*}
    A'(x):(\xi\otimes\xi) & = \frac{Id \mbox{ tr} A_s(x,x) + 2 A_s(x,x)}{n+2}: (\xi\otimes\xi) \\ & = \frac{ \mbox{ tr} A_s(x,x) |\xi|^2 + 2 A_s(x,x):(\xi\otimes\xi)}{n+2} \\ & \geq \frac{ n \nu |\xi|^2 + 2 \nu |\xi|^2}{n+2} = \nu |\xi|^2. 
\end{align*}
The inequality in the above computation is due to \ref{ass-positive} and the consideration that $\nu \in (0, \lambda_{min}(x,y)]$ for all $x,y\in \R^n$, where $\lambda_{min}(x,y)$ is the minimal eigenvalue of $A_s(x,y)$. 

\end{proof}

\begin{remark}
When the matrix $A_s$ is isotropic, i.e. of the form $\sigma(x,y)$Id for a scalar function $\sigma$, then the limit matrix $A'$ is
$$ A'(x):= \frac{Id \emph{ tr} A_s(x,x) + 2 A_s(x,x)}{n+2} = \frac{Id (n\sigma(x,x)) + 2 \sigma(x,x)Id}{n+2} = \sigma(x,x) Id = A_s(x,x), $$
and thus it is itself isotropic. In this case the above result can be inverted, since every isotropic, positive definite matrix $A'(x) = \gamma(x)$Id can be realized as the limit of a separable, isotropic matrix $A_s(x,y) = \gamma^{1/2}(x)\gamma^{1/2}(y)$Id. There exist however many different, non-separable matrices $A_s$ giving rise to the same limit matrix $A'$.

\end{remark}

\bibliography{refs-mathscinet} 

\bibliographystyle{alpha}

\end{document}